\newtheorem{theorem}{Theorem}[section]
\newtheorem{lemma}[theorem]{Lemma}
\newtheorem{proposition}[theorem]{Proposition}
\newtheorem{corollary}[theorem]{Corollary}
\theoremstyle{definition}
\newtheorem{definition}[theorem]{Definition}
\newtheorem{remark}[theorem]{Remark}
\numberwithin{equation}{section}
\def\mathrmdef#1{\expandafter\def\csname#1\endcsname{{\rm#1}}}
 \def\mathsfdef#1{\expandafter\def\csname#1\endcsname{{\rm\sf#1}}}
\def\bkF{\mathbb{F}}
\def\bkZ{\mathbb{Z}}
\def\bkQ{\mathbb{Q}}
\author[M.M.~Clementino]{Maria Manuel Clementino}
\address{CMUC, Department of Mathematics, University of Coimbra, 3001-501 Coimbra, Portugal}\thanks{Partially supported by  Centro de Matem\'{a}tica da Universidade de
  Coimbra -- UID/MAT/00324/2013, funded by the Portuguese Government
  through FCT/MCTES and co-funded by the European Regional Development Fund
  through the Partnership Agreement PT2020, and by the FCT Sabbatical Grant SFRH/BSAB/127925/2016.}
\email{mmc@mat.uc.pt}
\author[M.~Gran]{Marino Gran}
\address{Institut de Recherche en Math\'{e}matique et Physique, Universit\'{e} catholique de Louvain, Chemin du Cyclotron 2, 1348 Louvain-la-Neuve, Belgium}
\email{marino.gran@uclouvain.be}
\author[G.~Janelidze]{George Janelidze}
\address{Department of Mathematics and Applied Mathematics, University of Cape Town, Rondebosch 7701, Cape Town, South Africa}
\thanks{Partially supported by South African NRF}
\email{George.Janelidze@uct.ac.za}
\keywords{localization, protolocalization, protoadditive reflection, Kurosh-Amitsur radical, attainable subvariety, internal group, short exact sequence}
\subjclass[2000]{18A40, 18B40, 18E35, 18E40, 16N80}
\begin{document}

\title[Some remarks on protolocalizations and protoadditive reflections]{Some remarks on protolocalizations and protoadditive reflections}

\begin{abstract}
We investigate additional properties of protolocalizations, introduced and studied by F. Borceux,
M. M. Clementino, M. Gran, and L. Sousa, and of protoadditive reflections, introduced and studied by T. Everaert and M. Gran. Among other things we show that there are no non-trivial (protolocalizations and) protoadditive reflections of the category of groups, and establish a connection between protolocalizations and Kurosh--Amitsur radicals of groups with multiple operators whose semisimple classes form subvarieties.

\end{abstract}
\maketitle

\section{Introduction}
Consider a reflection
\begin{equation}
\label{eq:refl}
\xymatrix@!C=48pt{ \X  \ar@<-1ex>[r]\ar@{}[r]|{\perp} & \C
\ar@<-1ex>[l]_F  }
\end{equation}
 in which $\C$ is a homological category (in the sense of \cite{BB2004}), $\X$ is a full replete reflective subcategory of $\C$, and $F$ is the left adjoint of the inclusion functor $\X \to \C$; it will be often
convenient to say that $F$ itself is the reflection $\C\ \to \X$.
We will be interested in the following conditions on the functor $F$:
\begin{enumerate}[(a)]
\item $F$ preserves all finite limits; that is, $F$ is a \emph{localization};
\item $F$ preserves kernels of regular epimorphisms, or, equivalently, short exact sequences, in which case it is called a \emph{protolocalization} \cite{BCGS08};
\item $F$ preserves kernels of split epimorphisms, or, equivalently, split short exact sequences, in which case it is called a \emph{protoadditive reflection} \cite{EG1} (see also \cite{EG2});
\item $F$ preserves finite products.
\end{enumerate}
If \C\ is an abelian category, (a) is equivalent to (b) and (c) is equivalent to (d), which is surely the reason why conditions (b) and (c) were only introduced recently. In this paper we complement the properties and examples of protolocalizations and protoadditive reflections presented in \cite{BCGS08, EG1, EG2}. Among our contributions we highlight the following:
\begin{enumerate}[1.]
\item	 In a regular category, the factors of the (regular epi, mono)-factorization of a protoadditive reflection are also protoadditive (Theorem \ref{protoadditive}); in general this is not valid for protolocalizations.
\item	There is no non-trivial protoadditive reflection of the category of groups (and therefore the same is true for protolocalizations) (Theorem \ref{thm:no_proto}); there are, however, several known examples of protoadditive reflections and protolocalizations of the category of internal groups in a category (Remark \ref{remarks}).
\item	Being a (regular epi)-reflective protolocalization of a variety of groups with multiple operators turns out to be closely related to other conditions considered in several areas of algebra. In particular every protolocalization of such a variety is
\[\mbox{semisimple = attainable = admissible = semi-left-exact = fibered}\]
(see Theorem \ref{thrm:AA.6}, Proposition \ref{thrm:AA.8}, and Remark \ref{rem:AA.9} for details).
\item There are no non-trivial localizations, neither of the category of rings nor of the category of commutative rings, that are reflections onto subvarieties (Remark \ref{AA.10}.\eqref{AA.10.1}).
\item The reflection of the variety of commutative von Neumann regular rings to the variety of Boolean rings is not a localization (Remark \ref{AA.10}.\eqref{AA.10.3}). This negatively answers a question asked in \cite{BCGS08}.
\end{enumerate}
{\bf Acknowledgement.} The authors are very grateful to Francesca Cagliari for many discussions and interesting new observations of possible reflections of the category of topological groups, including pointing out the existence of the localization mentioned as Remark 3.2(b) in this paper.

\section{Protolocalizations and protoadditive reflections}\label{sect:2}
In this section we shall always assume that $\C$ is a \emph{homological} \cite{BB2004, BC05} category. Recall that
 a finitely complete pointed category \C\ is homological when it is also \begin{itemize}
\item  \emph{regular}: any arrow in $\C$ factors as a regular epimorphism followed by a monomorphism, and these factorizations are pullback-stable;
\item and \emph{protomodular} \cite{Bourn0}: given any commutative diagram in $\C$
 \begin{equation}\label{eq:protoloc}
\xymatrix@!C=48pt{0\ar[r]&K\ar[r]^k\ar[d]_{u}&A \ar@<.5ex>[r]^f\ar[d]^{v}&B \ar@<.5ex>[l]^s \ar[d]^{w}\ar[r]&0 \\
0\ar[r]&K' \ar[r]_{k'}& A'  \ar@<.5ex>[r]^{f'} &B'  \ar@<.5ex>[l]^{s'}\ar[r]&0 }
\end{equation}
where $k = \ker (f)$, $k' = \ker(f')$, $f \cdot s = 1_B$, $f' \cdot s' = 1_B'$, then $v$ is an isomorphism whenever both $u$ and $w$ are isomorphisms.
\end{itemize}
 There are many examples of homological categories, such as the categories of groups, Lie and other kinds of non-unital algebras over rings,
crossed modules, and Heyting semi-lattices. For a pointed (finitary) variety C of universal algebras the following conditions are actually equivalent:
\begin{itemize}
\item $\C$ is homological;
\item $\C$ is semi-abelian \cite{JMT};
\item $\C$ is protomodular \cite{Bourn0};
\item $\C$ is classically ideal determined in the sense of \cite{U}, which is the same as BIT speciale in the sense of \cite{U1}.
\end{itemize}
There are other examples of homological categories that are not (finitary) varieties of universal algebras, for instance ${\mathbb C}^*$-algebras, topological groups, Hausdorff groups and, more generally, any model of a semi-abelian theory in the category of topological (or Hausdorff) spaces \cite{BC05}. The dual of the category of pointed sets, as also the dual of pointed objects of any topos, are homological categories \cite{BB2004}.

Let  $\X$ be a reflective (full and replete) subcategory of $\C$, with reflection $F \colon \C \to \X$,
and unit $\eta=(\eta_C:C\to F(C))_{C\in\C}$. 
As observed in \cite{BCGS08}, $\X$ is also a homological category provided that it is regular. The following two notions were introduced in \cite{BCGS08} and \cite{EG1}, respectively.

\begin{definition}
\begin{enumerate}
\item The reflection \eqref{eq:refl} is said to be a \emph{protolocalization} if $\X$ is regular and $F$ preserves short exact sequences, that is, in the diagram
\begin{equation}\label{eq:protoloc}
\xymatrix@!C=48pt{0\ar[r]&K\ar[r]^k\ar[d]_{\eta_K}&A\ar[r]^f\ar[d]^{\eta_A}&B\ar[d]^{\eta_B}\ar[r]&0\\
0\ar[r]&F(K)\ar[r]^{F(k)}&F(A)\ar[r]^{F(f)}&F(B)\ar[r]&0}
\end{equation}
if the top line is a short exact sequence in $\C$, so that $f=\coker(k)$ and $k=\ker(f)$, then the bottom line is a short exact sequence in $\X$. We will also say that $F : \C\to\X$ is a protolocalization of \C.
\item The reflection \eqref{eq:refl} is said to be \emph{protoadditive} if $F$ preserves split short exact sequences, that is, in the diagram
\begin{equation}\label{eq:protoadd}
\xymatrix@!C=48pt{0\ar[r]&K\ar[r]^k\ar[d]_{\eta_K}&A\ar@<.5ex>[r]^f\ar[d]^{\eta_A}&B\ar@<.5ex>[l]^s\ar[d]^{\eta_B}\ar[r]&0\\
0\ar[r]&F(K)\ar[r]^{F(k)}&F(A)\ar@<.5ex>[r]^{F(f)}&F(B)\ar@<.5ex>[l]^{F(s)}\ar[r]&0}
\end{equation}
with $f\cdot s=1_B$, if the top line is a short exact sequence in $\C$, then the bottom line is a short exact sequence in $\X$. We will also say that $F : \C\to\X$ is a protoadditive reflection of \C.
\end{enumerate}
\end{definition}

\begin{remark}
\begin{enumerate}
\item As shown in \cite{BCGS08}, given a reflection \eqref{eq:refl}, $\X$ is regular if, and only if,
$F$ preserves pullbacks of the form
\[\xymatrix{B \times_Y X \ar[r]\ar[d]&X\ar[d]^f\\
B\ar[r]_b&Y}\]
where $f\in\X$, and $b$ is the (regular) image in the regular category $\C$ of a morphism in $\X$. In particular, when $\X$ is \emph{(regular epi)-reflective}, that is, if $\eta_C$ is a regular epimorphism for every object $C$ of \C, then $F$ preserves these pullbacks, and so \X\ is regular.
\item Every protolocalization of a homological category is homological. Indeed, if \C\ is pointed, or \C\ is protomodular, then so is any reflective subcategory of \C. Only regularity does not follow for free.
\end{enumerate}
\end{remark}

\begin{lemma}\label{thm:1.4}
\begin{enumerate}[\em (1)]
\item For a (regular epi)-reflection \eqref{eq:refl} and a short exact sequence
\begin{equation}\label{eq:ses}
\xymatrix@!C=48pt{0\ar[r]&K\ar[r]^k&A\ar[r]^f&B\ar[r]&0}
\end{equation}
in $\C$, the following conditions are equivalent, for diagram \eqref{eq:protoloc}:
\begin{enumerate}[\em (i)]
\item the bottom line is a short exact sequence in $\X$;
\item the bottom line is a short exact sequence in $\C$.
\end{enumerate}
\item For a reflection \eqref{eq:refl} and a split short exact sequence
\begin{equation}\label{eq:sses}
\xymatrix@!C=48pt{0\ar[r]&K\ar[r]^k&A\ar@<.5ex>[r]^f&B\ar@<.5ex>[l]^s\ar[r]&0}
\end{equation}
in $\C$, the following conditions are equivalent, for diagram \eqref{eq:protoadd}:
\begin{enumerate}[\em (i)]
\item the bottom line is a short exact sequence in $\X$;
\item the bottom line is a short exact sequence in $\C$.\hfill$\Box$
\end{enumerate}
\end{enumerate}
\end{lemma}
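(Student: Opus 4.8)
The plan is to observe that, in both parts, the two candidate notions of ``short exact sequence'' can differ only in how the regular-epimorphism (equivalently, cokernel) part is computed, because the kernel part is computed identically in $\X$ and in $\C$. Indeed, as $\X$ is a reflective subcategory of $\C$, the inclusion $\X\to\C$ is a right adjoint and hence preserves all limits; in particular it preserves kernels, so for the morphism $F(f)$, whose domain and codomain lie in $\X$, the kernel taken in $\X$ coincides with the kernel taken in $\C$. Consequently the condition ``$F(k)=\ker(F(f))$'' holds in $\X$ if and only if it holds in $\C$. Recalling that in a homological category a short exact sequence is precisely a pair $(k,f)$ with $k=\ker(f)$ and $f$ a regular epimorphism, it then remains only to compare the statements ``$F(f)$ is a regular epimorphism in $\X$'' and ``$F(f)$ is a regular epimorphism in $\C$''.

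For part (2) this comparison is immediate: from $f\cdot s=1_B$ one gets $F(f)\cdot F(s)=1_{F(B)}$, so $F(f)$ is a split epimorphism; and a split epimorphism is a regular epimorphism in any category, whence $F(f)$ is a regular epimorphism both in $\X$ and in $\C$. Together with the observation on kernels, this would show that each of (i) and (ii) is equivalent to the single condition $F(k)=\ker(F(f))$, and therefore that (i)$\Leftrightarrow$(ii).

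For part (1) the two sides must be handled separately. On the $\X$-side, $F$ is a left adjoint and so preserves coequalizers, hence regular epimorphisms; since $f$ is a regular epimorphism in $\C$, its image $F(f)$ is a regular epimorphism in $\X$, so the $\X$-condition is automatic. On the $\C$-side I would use the naturality square $F(f)\cdot\eta_A=\eta_B\cdot f$: because the reflection is a (regular epi)-reflection, $\eta_A$ and $\eta_B$ are regular epimorphisms, and $f$ is a regular epimorphism, so the composite $\eta_B\cdot f=F(f)\cdot\eta_A$ is a composite of regular epimorphisms in the regular category $\C$ and is therefore itself a regular epimorphism; since regular epimorphisms in a regular category coincide with strong epimorphisms, and strong epimorphisms cancel on the right (if $h\cdot g$ is strong then $h$ is strong), it follows that $F(f)$ is a regular epimorphism in $\C$. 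As before, both (i) and (ii) then reduce to $F(k)=\ker(F(f))$, giving the equivalence.

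The only genuine obstacle is the regular-epimorphism comparison in part (1); everything else is either the preservation of limits by the inclusion or the triviality that split epimorphisms are regular. The point to be careful about is that one cannot invoke a single ``the inclusion preserves short exact sequences'' principle, since the inclusion $\X\to\C$ does not preserve regular epimorphisms in general; the $\X$-side and the $\C$-side must instead be argued by different means, namely preservation of colimits by $F$ for the former and the right-cancellation property of regular${}={}$strong epimorphisms for the latter.
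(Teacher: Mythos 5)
Your argument is correct; note, though, that the paper offers no proof to compare against --- Lemma \ref{thm:1.4} is stated with a terminal $\Box$, the authors regarding it as routine --- so your write-up is a filling-in rather than an alternative to a printed argument. Your reduction is the natural one: kernels transfer between $\X$ and $\C$ because the inclusion, being a right adjoint, preserves limits (and, by fullness, a kernel in $\C$ whose data lie in $\X$ is a kernel in $\X$); the split-epi observation settles part (2); and in part (1) the two regular-epi conditions are handled asymmetrically, exactly as they must be, via preservation of coequalizers by the left adjoint $F$ on the $\X$-side and via the naturality square $F(f)\cdot\eta_A=\eta_B\cdot f$ together with right-cancellation of strong${}={}$regular epimorphisms on the $\C$-side. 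Your closing remark correctly identifies why no single ``the inclusion preserves exactness'' principle can be invoked. One small imprecision: in part (2) the reflection is arbitrary, so $\X$ need not be regular, hence not homological, and your appeal to the characterization ``short exact ${}={}$ kernel condition plus regular epi in a homological category'' does not literally apply on the $\X$-side. This is easily repaired: $\X$, being a full reflective subcategory of the protomodular pointed category $\C$, is itself protomodular and pointed, and in such a category a split epimorphism is the cokernel of its kernel (since the kernel and the section are jointly strongly epimorphic); thus short exactness of the bottom row in $\X$ still reduces to the single condition $F(k)=\ker(F(f))$, and your equivalence goes through. In part (1) no such issue arises, since a (regular epi)-reflective $\X$ is regular and hence homological, as recorded in the paper's Remark 2.2.
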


We recall that a subcategory $\X$ is said to have the \emph{2-out-of-3 property} if, for any short exact sequence \eqref{eq:ses}
in $\C$, if any two of the three objects $K,A,B$ belong to $\X$, then the third one also belongs to $\X$. $\X$ is said to be \emph{extension closed} if, for any short exact sequence \eqref{eq:ses}, with $K$ and $B$ in $\X$ also $A$ belongs to $\X$. Analogously, one can define the \emph{split 2-out-of-3-property} and \emph{split extension closed} subcategory.

\begin{proposition}
\begin{enumerate}[\em (1)]
\item If $\X$ is a (regular epi)-reflective protolocalization of $\C$, then $\X$ has the 2-out-of-3 property. In particular, $\X$ is extension closed.
\item If \X\ is a protoadditive reflection, so in particular when it is a protolocalization, then \X\ has the split 2-out-of-3 property. Consequently, $\X$ is split extension closed.
\end{enumerate}

\end{proposition}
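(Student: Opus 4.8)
The plan is to reduce both parts to the single observation that, for a full replete reflective subcategory, an object $C$ lies in $\X$ precisely when the unit $\eta_C$ is an isomorphism. Given a short exact sequence \eqref{eq:ses} (resp.\ a split one \eqref{eq:sses}) two of whose objects $K,A,B$ lie in $\X$, I would apply the protolocalization property (resp.\ protoadditivity) together with Lemma \ref{thm:1.4}(1) (resp.\ (2)) to produce a morphism of short exact sequences in $\C$ whose vertical components are $\eta_K,\eta_A,\eta_B$, two of which are then isomorphisms. Everything comes down to showing that, in such a morphism of (split) short exact sequences in a homological category, if two of the three vertical maps are isomorphisms then so is the third; the object in question then has an invertible unit and hence belongs to $\X$.

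For the three directions I would argue separately. When $A,B\in\X$ (the kernel direction), no use of the reflection property is needed: $K=\ker f$ is a finite limit of a diagram lying in $\X$, and a full reflective subcategory is closed under limits, so $K\in\X$ at once. When $A,K\in\X$ (the quotient direction), I would use that by protolocalization and Lemma \ref{thm:1.4}(1) the bottom row is short exact in $\C$, so that $F(f)=\coker(F(k))$; since $\eta_A$ and $\eta_K$ are isomorphisms, the arrows $F(k)$ and $k$ are isomorphic, hence their cokernels $F(f)$ and $f$ are isomorphic through the induced comparison, which is exactly $\eta_B$, so $\eta_B$ is invertible and $B\in\X$. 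When $K,B\in\X$ (the extension direction), I would invoke the short five lemma, valid in any homological category: with $\eta_K$ and $\eta_B$ isomorphisms and both rows exact, $\eta_A$ is forced to be an isomorphism. This last direction is precisely the \emph{extension closed} assertion.

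Part (2) is lighter in two of the three cases, because \eqref{eq:sses} is split. The quotient direction is then immediate even without protoadditivity: $B$ is a retract of $A$ via $s$ and $f$, and a reflective subcategory is closed under retracts, so $A\in\X$ already forces $B\in\X$. The kernel direction is settled by the same closure-under-limits argument as above. Only the extension direction genuinely uses the hypothesis, and there it is literally the protomodularity axiom recorded for $\C$ (the split short five lemma) applied to the split short exact bottom row furnished by protoadditivity and Lemma \ref{thm:1.4}(2); this gives the \emph{split extension closed} assertion.

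I expect the quotient direction of part (1) to be the main obstacle, since it is the only step that really exploits preservation of a \emph{non-split} short exact sequence rather than mere reflectivity; the delicate point is to pass from the cokernel comparison to the invertibility of $\eta_B$, which is where Lemma \ref{thm:1.4}(1) and hence the \emph{(regular epi)-reflective} hypothesis enter. A secondary thing to verify is that the full (non-split) short five lemma is available in $\C$: this is a standard property of homological categories, and in the split situation of part (2) it is nothing but the protomodularity condition already stated in the text.
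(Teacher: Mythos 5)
Your proposal is correct and follows essentially the same route as the paper: the kernel direction by closure of a full reflective subcategory under limits, the quotient direction by applying $F$ and invoking Lemma \ref{thm:1.4} to identify $\eta_B$ with the cokernel comparison, and the extension direction by the Short Five Lemma (protomodularity in the split case of part (2)). Your only deviation is the pleasant shortcut in the quotient direction of part (2) via closure of $\X$ under retracts, which shows protoadditivity is not even needed there; this is a minor refinement of, not a departure from, the paper's argument.
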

\begin{proof}
(1): 
Given a short exact sequence \eqref{eq:ses}, if $f:A\to B$ belongs to $\X$, then its kernel also belongs to $\X$. If $K$ and $A$ belong to $\X$, then we apply $F$ to the sequence \eqref{eq:ses} and conclude, by Lemma \ref{thm:1.4}, that $B\in\X$. If $K$ and $B$ belong to $\X$, then we apply $F$ again and use the Short Five Lemma.

(2) can be shown analogously.
\end{proof}

\begin{remark}\label{rem:GJ}
The hypothesis that the reflections are regular epimorphisms in statement (1) of Lemma \ref{thm:1.4} is essential.
Indeed, we take \C\ to be the category of pairs $(A,R)$, where $A$ is an abelian group and $R$ is a subgroup of $A\times A$, and $f:(A,R)\to(B,S)$ is a morphism in \C\ if $f:A\to B$ is a homomorphism with $(f\times f)(R)\subseteq S$. This category is additive and homological, since it is a full reflective subcategory, closed under subobjects, of the abelian category of internal graphs of the category of abelian groups. Let \X\ be the subcategory of \C\ with objects all $(A,R)$ in which $R$ is a transitive relation on $A$.
The reflection $F : \C\to\X$ has $F(A,R)=(A,R^\tr)$, where $R^\tr$ is the smallest transitive relation on $A$ which is a subgroup of $A\times A$ containing $R$. This is a monoreflection of course.
Take $A = \bkZ^4$ (where $\bkZ$ is the additive group of integers, although we could take any non-trivial abelian group instead of it) and
$R =\{((m,0,n,0),(0,m,0,n)) \in A\times A\, |\, m, n \in\bkZ\}$.
Take $B = \bkZ^3$, define $f : A\to B$ by $f(m,n,p,q)=(m,n+ p,q)$, and take $S=(f\times f)(R)$; $f:(A,R) \to (B,S)$ is a regular epimorphism in \C, and, together with its kernel, it gives a short exact sequence in \C:
\begin{equation}\label{eq:example}
\xymatrix{0\ar[r]&(\Ker(f),\{((0,0,0,0),(0,0,0,0))\})\ar[r]&(A,R)\ar[r]^f&(B,S)\ar[r]&0.}
\end{equation}
Both $(A,R)$ and the kernel of $f:(A,R)\to(B,S)$ belong to \X, but $(B,S)$ does not, since $S$ is not transitive. Therefore the image of \eqref{eq:example} under $F$ is a short exact sequence in \X\ -- since $F$ preserves cokernels -- but it is not a short exact sequence in \C.
\end{remark}

\begin{lemma}\cite[Proposition 2.5]{EG2}\label{mono}
If $F$ is a (regular epi)-reflection then it is protoadditive if, and only if, for any split short exact sequence \eqref{eq:sses}, $F(k)$ is a monomorphism.
\end{lemma}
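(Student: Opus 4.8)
For a fixed split short exact sequence \eqref{eq:sses} I would in fact prove the sharper, per-sequence statement that the bottom row of \eqref{eq:protoadd} is a short exact sequence if and only if $F(k)$ is a monomorphism; the lemma then follows by quantifying over all such sequences. The ``only if'' direction is immediate: if $F$ is protoadditive then, by definition, in the bottom row one has $F(k)=\ker(F(f))$, so $F(k)$ is in particular a monomorphism.

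For the converse, fix a split short exact sequence \eqref{eq:sses} and assume that $F(k)$ is a monomorphism. First I would record what comes for free. Since $f\cdot s=1_B$, the arrow $F(f)$ is split by $F(s)$, hence is a regular epimorphism; and since $F$, being a left adjoint, preserves cokernels and $f=\coker(k)$, we get $F(f)=\coker(F(k))$. Writing $\kappa=\ker(F(f))$, the sequence $0\to \Ker(F(f))\xrightarrow{\kappa}F(A)\xrightarrow{F(f)}F(B)\to 0$ is then exact in $\X$ (in a homological category every regular epimorphism is the cokernel of its kernel), and from $F(f)\cdot F(k)=0$ the arrow $F(k)$ factors as $F(k)=\kappa\cdot j$ for a unique $j\colon F(K)\to\Ker(F(f))$, which is a monomorphism because $F(k)$ is. Thus the problem reduces to showing that $j$ is an isomorphism, equivalently that the image of $F(k)$ is all of $\Ker(F(f))$.

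The heart of the argument, and the step I expect to be the main obstacle, is to prove that $\mathrm{im}(F(k))$ is a \emph{normal} subobject of $F(A)$. Here I would use the hypothesis that $F$ is a (regular epi)-reflection: the units $\eta_K$ and $\eta_A$ are regular epimorphisms, so by naturality $\mathrm{im}(F(k))=\mathrm{im}(F(k)\cdot\eta_K)=\mathrm{im}(\eta_A\cdot k)$; that is, $\mathrm{im}(F(k))$ is the direct image along the regular epimorphism $\eta_A$ of the subobject $k\colon K\rightarrowtail A$. Since $k=\ker(f)$ is a normal subobject of $A$, and since in a homological category the direct image of a normal subobject along a regular epimorphism is again normal, it follows that $\mathrm{im}(F(k))$ is normal in $F(A)$. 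This is the only place where I must appeal to a genuinely homological fact rather than a purely diagrammatic one, and it is exactly what the (regular epi)-reflection buys us: without $\eta_A$ being a regular epimorphism, $\mathrm{im}(F(k))$ need not be normal, and then $F(k)$ could be monic while the bottom row failed to be exact.

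To conclude I would combine this with the right-exactness already obtained. On the one hand, $\Ker(F(f))$ is the kernel of $\coker(F(k))=F(f)$, and the kernel of the cokernel of a morphism is the normal closure of its image; hence $\Ker(F(f))$ is the normal closure of $\mathrm{im}(F(k))$. On the other hand, we have just shown that $\mathrm{im}(F(k))$ is already normal, so it coincides with its own normal closure. Therefore $\mathrm{im}(F(k))=\Ker(F(f))$, which forces $j$ to be a regular epimorphism as well as a monomorphism, hence an isomorphism. Factoring $F(k)$ through its image as $F(K)\twoheadrightarrow\mathrm{im}(F(k))=\Ker(F(f))\xrightarrow{\kappa}F(A)$ and using that $F(k)$ is monic shows the first factor is invertible, so the bottom row of \eqref{eq:protoadd} is a short exact sequence in $\X$; this is the required conclusion by Lemma~\ref{thm:1.4}(2). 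Quantifying over all split short exact sequences then yields that $F$ is protoadditive.
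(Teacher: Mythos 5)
Your reduction to a per-sequence statement and the ``only if'' direction are fine, but the converse has a genuine gap, and it sits exactly where you locate ``the heart of the argument'' --- though not quite where you expect. The decisive error is a conflation of cokernels in $\X$ with cokernels in $\C$. Since $F$ is left adjoint to the inclusion, $F(f)=\coker(F(k))$ \emph{as computed in $\X$}; consequently ``the kernel of the cokernel is the normal closure of the image'' is only valid with the normal closure taken in $\X$: a morphism $h$ with $h\cdot F(k)=0$ factors through $F(f)$ only when its codomain lies in $\X$, so $\Ker(F(f))$ is the smallest subobject of $F(A)$ containing $\mathrm{im}(F(k))$ that is the kernel of a morphism \emph{of $\X$}. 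Your normality claim, even where it holds, only exhibits $\mathrm{im}(F(k))$ as a kernel of some morphism of $\C$, which does not suffice. Symptomatically, your argument never genuinely uses the section $s$ (its only use, that $F(f)$ is a regular epimorphism, already follows from $F(f)\cdot\eta_A=\eta_B\cdot f$), so if it were correct it would prove the analogous statement for arbitrary short exact sequences, i.e.\ that $F(k)$ monic for all short exact sequences forces $F$ to be a protolocalization --- and the paper explicitly flags this as false in the remark following the lemma, via Remark \ref{rem:protoloc}. That example refutes your chain of reasoning step by step: for $F'\colon\Ab\to\Ab_\tf$ and $0\to\bkZ\xrightarrow{\times n}\bkZ\to\bkZ/n\bkZ\to 0$, the map $F'(k)=\times n$ is monic and its image $n\bkZ$ is normal in $\Ab$ (every subobject is), yet $\Ker(F'(f))=\Ker(\bkZ\to 0)=\bkZ\neq n\bkZ$: the $\X$-normal closure of $n\bkZ$ is all of $\bkZ$, because $n\bkZ$ ($n\geq 2$) is the kernel of no homomorphism into a torsion-free group. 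In the split case $\mathrm{im}(F(k))$ is indeed $\X$-normal, but only because it equals $\Ker(F(f))$ --- which is the thing being proved, so your route is circular at precisely the point where splitness must enter.

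A secondary problem: the ``genuinely homological fact'' you appeal to is false in homological categories; direct images of normal subobjects along regular epimorphisms are normal in \emph{semi-abelian} (Barr-exact) categories, not in general homological ones. In $\Grp(\Top)$, which is homological, take $q\colon\mathbb{R}_d\times\mathbb{R}\to\mathbb{R}$, $(x,y)\mapsto x+y$, where $\mathbb{R}_d$ denotes the reals with the discrete topology; this is an open surjection, hence a regular epimorphism, and $K=\mathbb{R}_d\times 0$ is the kernel of the second projection, hence normal. Its regular image is the continuous bijection $\mathbb{R}_d\to\mathbb{R}$, which is not a kernel, since kernels in $\Grp(\Top)$ carry the subspace topology. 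The actual proof of the lemma (Proposition 2.5 of \cite{EG2}, which the paper cites rather than reproves) uses the splitting through protomodularity: comparing the split short exact sequence \eqref{eq:sses} with $0\to\Ker(F(f))\to F(A)\to F(B)\to 0$ via the regular epimorphisms $\eta_A$, $\eta_B$, one pulls $\Ker(F(f))$ back along $\eta_A$ and uses that $\ker(f)$ and $s$ are jointly strongly epimorphic to conclude that the induced morphism $K\to\Ker(F(f))$ is a regular epimorphism; hence your $j\colon F(K)\to\Ker(F(f))$ is a regular epimorphism, which is monic exactly when $F(k)$ is, so an isomorphism, and one concludes by Lemma \ref{thm:1.4}(2). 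That argument breaks for non-split sequences, as it must.
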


\begin{remark}
The corresponding result for protolocalizations is false, as the example of Remark \ref{rem:protoloc} shows.
\end{remark}

\begin{proposition}
If \C\ is a regular category, any reflection \eqref{eq:refl} factors as a (regular epi)-reflection $F' \colon \C \rightarrow \Y$ followed by a monoreflection $F'' \colon \Y \rightarrow \X$, as in the diagram:
\begin{equation}\label{eq:fact}
\xymatrix@!C=30pt{ {\;\;\;\X\;\;\;}  \ar@<-1ex>[rr]\ar@{}[rr]|{\perp}\ar@<-.5ex>[rdd] && {\;\;\;\C\;\;\;}\ar@<-.5ex>[ldd]_{F'}\ar@<-1ex>[ll]_{F}\\
\\
&{\;\;\Y\;\;} \ar@<-1.5ex>[uur]\ar@<-1.5ex>[luu]_{F''}\ar@{}[uur]|{\perp}\ar@{}@<-.5ex>[uul]|{\perp} }
\end{equation}
where $\Y=\{Y\in\C\,|\, \mbox{ there is a monomorphism }Y\to X\mbox{ with }X\in\X\}$.
Moreover, as any monoreflection is an epi-reflection, $F''$ is a bireflection, that is, its unit is pointwise both an epimorphism and a monomorphism.
\end{proposition}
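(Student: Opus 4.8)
The plan is to read off the factorization from the (regular epi, mono)-factorizations that exist because $\C$ is regular. For each object $C$ I would factor the unit as
\[ \eta_C = m_C \cdot e_C, \qquad C \xrightarrow{\,e_C\,} F'(C) \xrightarrow{\,m_C\,} F(C), \]
with $e_C$ a regular epimorphism and $m_C$ a monomorphism. Since $F(C)\in\X$ and $m_C$ is a monomorphism, the object $F'(C)$ lies in $\Y$; and $\X\subseteq\Y$ trivially. So $C\mapsto F'(C)$, equipped with the morphisms $e_C$, is the candidate (regular epi)-reflection onto $\Y$, while the monomorphisms $m_C$ are the candidate units of a reflection $\Y\to\X$.

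Next I would check that $e_C$ is universal among morphisms from $C$ into $\Y$. Given $Y\in\Y$, choose a monomorphism $n\colon Y\to X$ with $X\in\X$, and let $g\colon C\to Y$ be arbitrary. As $X\in\X$, the composite $n\cdot g$ factors uniquely through $\eta_C$, say $n\cdot g=h\cdot\eta_C=(h\cdot m_C)\cdot e_C$. In the resulting commutative square the left edge $e_C$ is a regular (hence strong) epimorphism and the right edge $n$ is a monomorphism, so there is a unique diagonal $\bar g\colon F'(C)\to Y$ with $\bar g\cdot e_C=g$; uniqueness is automatic since $e_C$ is an epimorphism. This makes $F'$ a (regular epi)-reflection onto $\Y$. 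A variant of the same argument shows that $m_C$ is the $\X$-reflection unit of $F'(C)$: any $p\colon F'(C)\to X'$ with $X'\in\X$ satisfies $p\cdot e_C=q\cdot\eta_C$ for a unique $q$, and cancelling the epimorphism $e_C$ in $p\cdot e_C=(q\cdot m_C)\cdot e_C$ gives $p=q\cdot m_C$. Hence $F''$, the restriction of $F$ to $\Y$, is a reflection onto $\X$, and $F''\cdot F'=F$, the composite unit being $m_C\cdot e_C=\eta_C$.

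For the monoreflection claim, note that when $Y\in\Y$ the $\Y$-reflection unit $e_Y$ is an isomorphism, being the unit at an object already belonging to $\Y$; therefore the $\X$-reflection unit $\eta_Y=m_Y\cdot e_Y$ is a monomorphism and $F''$ is a monoreflection. For the final assertion I would establish the general fact that every monoreflection is an epireflection. Suppose $a\cdot\eta_Y=b\cdot\eta_Y$ for morphisms $a,b\colon F''(Y)\to Z$ in $\Y$. Composing with the unit $\eta_Z\colon Z\to F''(Z)$ and using that $F''(Z)\in\X$ together with the universal property of $\eta_Y$ yields $\eta_Z\cdot a=\eta_Z\cdot b$; since $\eta_Z$ is a monomorphism we may cancel it to conclude $a=b$. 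Thus each unit of $F''$ is simultaneously a monomorphism and an epimorphism, i.e. $F''$ is a bireflection.

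The step needing most care is the diagonalization of the second paragraph, which relies on regular epimorphisms being strong in a regular category; the only real trick in the final part is to reflect the codomain $Z$ and exploit that its unit is monic.
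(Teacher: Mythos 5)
Your proof is correct and follows essentially the same route as the paper: factor the unit $\eta_C$ through its (regular epi, mono)-factorization and verify that the regular epi part is a universal arrow into $\Y$ and the mono part a universal arrow into $\X$. In fact you supply details the paper leaves implicit -- the diagonalization argument via strong epimorphisms for the universality of $e_C$, and the standard proof that every monoreflection is an epireflection -- all of which are accurate.
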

\begin{proof}
Let $\eta$, $\eta'$ and $\eta''$ be the units of the adjunctions for the reflections $F$, $F'$ and $F''$, respectively. For each object $C$ of \C, in the commutative diagram
\begin{equation}\label{eq:factor}
\xymatrix{C\ar[rr]^{\eta_C}\ar[dr]_{\eta'_C}&&F(C)\\
&F'(C)\ar[ru]_{\eta''_{F'(C)}}}
\end{equation}
we have:
\begin{enumerate}[--]
\item $\eta'_C : C \rightarrow F'(C)$ is a universal arrow giving the reflection of $C$ into $\Y$;
\item $\eta''_{F'(C)}:F'(C)\to F(C)$ is a universal arrow giving the reflection of $F'(C)$ into $\X$.
\end{enumerate}
\end{proof}
\begin{remark}
Observe that this result follows from a more general one that holds whenever there is a proper factorization system $(\mathcal E, \mathcal M)$ in $\C$. Under those assumptions the statement of the theorem would then give a canonical factorization of the reflection $F :  \C \rightarrow X$ as an $\mathcal E$-reflection followed by an $\mathcal M$-reflection. About the study of such factorization see also \cite{Baron}.
\end{remark}
\begin{theorem}\label{protoadditive}
Let \C\ be a regular category. In diagram \eqref{eq:fact} $F$ is a protoadditive reflection if, and only if, $F'$ and $F''$ are protoadditive.
\end{theorem}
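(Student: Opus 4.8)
The plan is to use the factorization in diagram \eqref{eq:fact}, which yields a natural isomorphism $F\cong F''\circ F'$, and to reduce the statement to a few preliminary facts about the intermediate category $\Y$. First I would record that $\Y$ is homological: since $F'$ is a (regular epi)-reflection it is regular, and being a reflective subcategory of the pointed protomodular category $\C$ it is also pointed and protomodular, so that protoadditivity of $F''\colon\Y\to\X$ even makes sense. Next I would note that $\Y$ is closed under subobjects in $\C$ (if $Y'\rightarrowtail Y$ and $Y\rightarrowtail X$ with $X\in\X$ then $Y'\rightarrowtail X$); in particular the $\C$-kernel of any morphism of $\Y$ lies in $\Y$ and is its kernel there. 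Together with the fact that a split epimorphism is always a regular epimorphism, this shows that a split short exact sequence in $\Y$ is automatically a split short exact sequence in $\C$. Finally, since $\Y$ is full reflective, $\eta'_Y$ is an isomorphism for every $Y\in\Y$, so the restriction of $F$ to $\Y$ is naturally isomorphic to $F''$.

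For the ``if'' direction I would simply compose. Given a split short exact sequence in $\C$, applying the protoadditive $F'$ produces a split short exact sequence in $\Y$, to which the protoadditive $F''$ applies to give one in $\X$; since $F''\circ F'\cong F$, this is precisely the assertion that $F$ preserves the original sequence, so $F$ is protoadditive.

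For the ``only if'' direction I would treat the two factors separately. For $F''$: by the observations above a split short exact sequence in $\Y$ is one in $\C$, so protoadditivity of $F$ turns it into a split short exact sequence $0\to F(K)\to F(A)\to F(B)\to0$ in $\X$; as $K,A,B\in\Y$ and $F|_\Y\cong F''$, this is isomorphic to $0\to F''(K)\to F''(A)\to F''(B)\to0$, whence $F''$ is protoadditive. For $F'$, a (regular epi)-reflection, I would invoke Lemma \ref{mono}, by which it suffices to prove that $F'(k)$ is a monomorphism for every split short exact sequence \eqref{eq:sses}. Protoadditivity of $F$ makes $F(k)$ a monomorphism, and under $F\cong F''\circ F'$ this says $F''(F'(k))$ is a monomorphism. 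Finally $F''$ reflects monomorphisms: naturality gives $\eta''_{Y'}\cdot g=F''(g)\cdot\eta''_Y$, and since $\eta''$ is pointwise monomorphic (as $F''$ is a monoreflection), $F''(g)$ monomorphic forces $g$ monomorphic. Applying this to $g=F'(k)$ shows $F'(k)$ is a monomorphism, so $F'$ is protoadditive by Lemma \ref{mono}.

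I expect the main obstacle to be the bookkeeping of the ``only if'' direction rather than any genuine difficulty: one must verify that a split short exact sequence living in $\Y$ really is one in the ambient $\C$ before protoadditivity of $F$ can be used, and one must transfer information correctly between $F$ and its two factors---through $F|_\Y\cong F''$ for the factor $F''$, and through the fact that the bireflection $F''$ reflects monomorphisms for the factor $F'$.
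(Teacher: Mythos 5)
Your proposal is correct and follows essentially the same route as the paper's proof: the ``if'' direction by composition, protoadditivity of $F'$ via Lemma \ref{mono} by deducing that $F'(k)$ is a monomorphism from the monomorphisms $F(k)$ and $\eta''$ in the naturality square, and protoadditivity of $F''$ by restricting to split short exact sequences with objects in \Y, where $F$ and $F''$ agree up to isomorphism. Your explicit verifications that \Y\ is homological and that split short exact sequences in \Y\ are split short exact sequences in \C\ merely spell out what the paper leaves implicit.
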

\begin{proof}
If $F'$ and $F''$ are protoadditive, then $F=F'\cdot F''$ is clearly protoadditive.

Now assume that $F$ is protoadditive, and consider the diagram
\begin{equation}
\label{eq:fact2}
\xymatrix@!C=48pt{0\ar[r]&K\ar[r]^k\ar[d]_{\eta'_K}&A\ar[d]^{\eta'_A}\ar@<.5ex>[r]^f&B\ar@<.5ex>[l]^s\ar[r]\ar[d]^{\eta'_B}&0\\
0\ar[r]&F'(K)\ar[r]^{F'(k)}\ar[d]_{\eta''_{F'(K)}}&F'(A)\ar[d]^{\eta''_{F'(A)}}\ar@<.5ex>[r]^{F'(f)}&F'(B)\ar@<.5ex>[l]^{F'(s)}
\ar[r]\ar[d]^{\eta''_{F'(B)}}&0\\
0\ar[r]&F(K)\ar[r]^{F(k)}&F(A)\ar@<.5ex>[r]^{F(f)}&F(B)\ar@<.5ex>[l]^{F(s)}\ar[r]&0}
\end{equation}
where the top row is an exact sequence, $\eta'=(\eta'_C)_{C\in\C}$ and $\eta''=(\eta''_Y)_{Y\in\Y}$ are the units of the adjunctions, so that $\eta_C=\eta''_{F'(C)}\cdot\eta'_C$ for every object $C$ of \C\  (as in \eqref{eq:factor}). By assumption $F(k)$ is a monomorphism, as well as $\eta''_{F'(K)}$, and so $F'(k)$ has to be a monomorphism. With Lemma \ref{mono} we conclude that $F'$ is protoadditive. To show that $F''$ is also protoadditive, consider diagram \eqref{eq:fact2} with $K,A,B$ in \Y. In this case the two top rows are isomorphic and the conclusion follows.
\end{proof}

\begin{remark}\label{rem:protoloc}
The previous statement is not valid when we replace protoadditive reflection with protolocalization. Consider the reflection
\[\xymatrix@!C=48pt{\Vect_\bkQ\ar@<-1ex>[r]\ar@{}[r]|-{\perp}& \Ab\ar@<-1ex>[l]_-{\bkQ\otimes -}}\]
where $\bkQ$ is the field of rational numbers, $\Vect_\bkQ$ is the category of vector spaces over $\bkQ$, \Ab\ is the category of abelian groups, and $\otimes$ is the tensor product. This reflection is a localization because $\bkQ$, being a flat module over the ring $\bkZ$ of integers, makes $\bkQ\otimes-$ an exact functor (between abelian categories).

Our factorization of this reflection is
\[\xymatrix@!C=48pt{\Vect_\bkQ\ar@<-1ex>[r]\ar@{}[r]|-{\perp}&\Ab_\tf\ar@<-1ex>[l]_-{F''}\ar@<-1ex>[r]\ar@{}[r]|-{\perp}&\Ab\ar@<-1ex>[l]_-{F'}}\]
where $\Ab_\tf$ is the category of torsion-free abelian groups.
But the reflector $F'$ is not a protolocalization. Indeed, the $F'$-image of the short exact sequence
\[\xymatrix@!C=48pt{0\ar[r]&\bkZ\ar[r]^{-\times n}&\bkZ\ar[r]&\bkZ/n\bkZ\ar[r]&0,}\]
where $n$ is any natural number larger than 1, is
\[\xymatrix@!C=48pt{0\ar[r]&\bkZ\ar[r]^{-\times n}&\bkZ\ar[r]&0\ar[r]&0,}\]
which is not a short exact sequence in the category of torsion-free abelian groups.
\end{remark}

\section{In $\Grp$ protolocalizations and protoadditive reflections trivialize}

\begin{theorem}\label{thm:no_proto}
There are no non-trivial protoadditive reflections of the category \Grp\ of groups. That is, if $F : \Grp\to\X$ is a protoadditive reflection, then either $\X = \Grp$ or \X\ consists of trivial groups.
\end{theorem}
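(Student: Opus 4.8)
The plan is to reduce, in two steps, to a very concrete condition on $\X$ and then violate that condition unless $\X$ is trivial or everything. First I would get rid of the ``monomorphism part'' of the reflection. By the factorization \eqref{eq:fact}, write $F=F''\circ F'$ with $F'\colon\Grp\to\Y$ a (regular epi)-reflection and $F''\colon\Y\to\X$ a monoreflection, where $\Y$ is closed under subobjects by construction. As recorded before Theorem \ref{protoadditive}, $F''$ is a bireflection, so each component of its unit is simultaneously an epimorphism and a monomorphism. But in $\Grp$ epimorphisms are exactly the surjections and monomorphisms exactly the injections, so each such component is a bijective homomorphism, hence an isomorphism; thus $F''$ is an equivalence and $\X=\Y$. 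Consequently $\X$ is closed under subobjects and products, and $\eta_G\colon G\to F(G)$ is a regular epimorphism for every $G$.

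Second, I would reformulate protoadditivity as a condition on the associated radical. Write $F(G)=G/N(G)$, where $N(G)=\bigcap\{\ker\varphi\mid \varphi\colon G\to X,\ X\in\X\}$. By Lemma \ref{mono}, $F$ is protoadditive exactly when $F(k)$ is a monomorphism for every split short exact sequence; for $A=K\rtimes B$ this says precisely that $N(K\rtimes B)\cap K=N(K)$ for every action of $B$ on $K$. The inclusion $N(K)\subseteq N(K\rtimes B)\cap K$ always holds by functoriality of the inclusion $k$, so the content is the reverse one. Since $\X$ is nontrivial and closed under subobjects, it contains a nontrivial cyclic subgroup: either $\bkZ\in\X$ or $\bkZ/p\bkZ\in\X$ for some prime $p$.

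The heart of the argument is then to exhibit, for any \emph{proper} nontrivial $\X$, a single split extension $A=K\rtimes B$ with $K\in\X$ for which the reflection collapses a nontrivial part of $K$, contradicting the displayed equality. The mechanism is already transparent when every object of $\X$ is abelian: choose $K\in\X$ nontrivial together with a nontrivial action of some finite $B$ (for instance $K=\bkZ$ with $B=\bkZ/2\bkZ$ acting by inversion, giving the infinite dihedral group, or $K=C\times C$ with $B=\bkZ/2\bkZ$ swapping the factors). Any homomorphism $A\to X$ with $X$ abelian kills every commutator $s(b)\,k\,s(b)^{-1}k^{-1}\in K$, and when the action is nontrivial these generate a nontrivial subgroup of $K$; hence $N(A)\cap K\neq 0=N(K)$, so protoadditivity fails. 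Thus no nontrivial $\X$ consisting of abelian groups can be protoadditive.

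The main obstacle is to run this collapse when $\X$ contains nonabelian groups and, above all, in the purely torsion case in which $\bkZ\notin\X$ (as for a Burnside-type $\X$), since there one can no longer use commutativity of the codomain to push elements of $K$ into $N(A)$. Here I would work with $K=C^{\,n}\in\X$ (a subobject of a power of the available cyclic group) equipped with a faithful and, as far as closure under subobjects permits, irreducible action of $B$, reducing to the case where $K$ has no proper nontrivial $B$-invariant subgroup, so that $N(A)\cap K$ is $0$ or all of $K$. The crux is then to rule out $N(A)\cap K=0$: one must show that, because $\X$ is proper (yet closed under subobjects and products and containing $C$), no $\X$-quotient of $A$ can embed $K$ compatibly with the $B$-action, forcing $N(A)\cap K=K$. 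I expect this uniform ``forced collapse'' to be the delicate point; once it is established, protoadditivity fails for every proper nontrivial $\X$, leaving only $\X=\Grp$ or $\X$ consisting of trivial groups.
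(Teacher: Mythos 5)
Your proposal has two gaps, one repairable and one fatal as written. The repairable one is in your first step: you cannot conclude that the bireflection $F''\colon\Y\to\X$ is an equivalence by invoking ``in $\Grp$ epimorphisms are surjections,'' because the unit of $F''$ is an epimorphism only \emph{in the subcategory} $\Y$, and epimorphisms in a full subcategory closed under subgroups and products need not be surjective. Concretely, take $\X=\Vect_\bkQ$, reflective in $\Grp$ via $G\mapsto\bkQ\otimes G^{\mathrm{ab}}$; then $\Y=\Ab_\tf$, and the unit $A\to\bkQ\otimes A$ of $F''$ is injective and an epimorphism in $\Ab_\tf$ (exactly the situation of Remark \ref{rem:protoloc}) without being surjective, so $F''$ is a bireflection that is not an equivalence and $\X\neq\Y$. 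The step can be repaired: by Theorem \ref{protoadditive}, $F'$ is protoadditive, so it suffices to prove the statement for the (regular epi)-reflection $F'\colon\Grp\to\Y$ and then, when $\Y=\Grp$, to observe that $F=F''$ is a bireflection of $\Grp$ itself, hence an isomorphism by balancedness of $\Grp$ --- which is how the paper concludes. Your later uses of ``$\X$ closed under subobjects'' must then refer to $\Y$.

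The fatal gap is the one you yourself flag: the ``forced collapse'' $N(A)\cap K=K$ in the nonabelian/torsion case is not proved, and it is the entire content of the theorem. Your abelian-case commutator trick genuinely works, but for general $\X$ (merely reflective, closed under subgroups and products --- not a variety, so attainability-type results are unavailable) nothing in your setup excludes that some $\X$-quotient of $K\rtimes B$ restricts to a monomorphism on $K$; ruling this out uniformly is as hard as the theorem itself. Moreover, your strategy tests protoadditivity only on finitely generated split extensions $C^n\rtimes B$ with fixed small $B$, whereas one must kill an \emph{arbitrary} element $a\in\Ker(\eta_B)$ for an \emph{arbitrary} group $B$. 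The paper does this with a different mechanism that bypasses all case analysis: for a nontrivial $X\in\X$ and any $B$, it applies protoadditivity to the split extension $B\flat X\to B+X\to B$, where $B\flat X$ is the coproduct of $B$-indexed copies of $X$ with injections $\iota_b$; since $F(k)$ is monic, the element $(x,a,x^{-1},a^{-1})=\iota_1(x)\iota_a(x^{-1})$ dies under $\eta_{B\flat X}$ for every $a\in\Ker(\eta_B)$, and since $F$ preserves coproducts this forces the distinct coproduct injections $F(\iota_1)$ and $F(\iota_a)$ to coincide, whence $a=1$ by pointedness of $\X$. Thus every unit is monic and $F$ is an isomorphism. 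Without an idea of comparable strength at your central step, the proposal does not constitute a proof.
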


\begin{proof}
Let $\X$ be a protoadditive subcategory of $\Grp$, with reflection $F:\Grp\to\X$ and unit $\eta$ as in the previous section.
Let $X\in\X$ and let $B$ be any group. Form the kernel $\xymatrix{B\flat X\ar[r]^-k&B+X}$ of the split epimorphism $\xymatrix{B+X\ar[r]^-{[1,0]}&B}$, and consider the diagram
\[\xymatrix@!C=48pt{&0\ar[d]&0\ar[d]&0\ar[d]&\\
0\ar[r]&\Ker(\eta_{B\flat X})\ar[r]\ar[d]^{\ker(\eta_{B\flat X})}&\Ker(\eta_{B+X})\ar[r]\ar[d]^{\ker(\eta_{B+X})}&\Ker(\eta_B)\ar[r]\ar[d]^{\ker(\eta_B)}&0\\
0\ar[r]&B\flat X\ar[r]^k\ar[d]^{\eta_{B\flat X}}&B+X\ar[r]^{[1,0]}\ar[d]^{\eta_{B+X}}&B\ar[r]\ar[d]^{\eta_B}&0\\
0\ar[r]&F(B\flat X)\ar[r]^{F(k)}\ar[d]&F(B+X)\ar[r]^{F([1,0])}\ar[d]&F(B)\ar[r]\ar[d]&0\\
&0&0&0&}\]
Let $a\in\Ker(\eta_B)$. Then, for any $x\in X$, the element $(x,a,x^{-1},a^{-1})$ of $B+X$ is both in $\Ker(\eta_{B+X})$ and $B\flat X$. Since $F(k)$ is a monomorphism, we conclude that $\eta_{B\flat X}(x,a,x^{-1},a^{-1})=1$.

Now, since $B\flat X$ is the coproduct of $B$ copies of $X$, with coproduct injections $\iota_b:X\to B\flat X$ given by $\iota_b(x)=(b,x,b^{-1})$ for every $b\in B$ and $x\in X$, and $F$ preserves coproducts, $F(\iota_b):X\to F(B\flat X)$ are coproduct injections in $\X$. Hence, for $a\in\Ker(\eta_B)$ as above,
\[\begin{array}{rcl}
1&=&\eta_{B\flat X}(x,a,x^{-1},a^{-1})=\eta_{B\flat X}(x)\eta_{B\flat X}(a,x^{-1},a^{-1})\vspace*{1mm}\\
&=&\eta_{B\flat X}(\iota_1(x))\eta_{B\flat X}(\iota_a(x^{-1}))=F(\iota_1)(x)F(\iota_a)(x^{-1}),\end{array}\]
and then we can conclude that the coproduct injections $F(\iota_1)$ and $F(\iota_a)$ are equal. Since $\X$ is pointed, this implies $a=1$, and so $\eta_B$ is a monomorphism for every group $B$. Since every monoreflection is an epi-reflection, $F$ is a bireflection, which means that $F$ is an isomorphism since the category of groups is balanced.
\end{proof}

\begin{remark}\label{remarks}
Let \A\ be a category with finite limits and $\Grp(\A)$ the category of internal groups in \A. In spite of Theorem \ref{thm:no_proto}, there are many non-trivial protolocalizations and protoadditive reflections of the form $\Grp(\A)\to\X$, with homological $\Grp(\A)$, including the following ones:
\begin{enumerate}[(a)]
\item All localizations $\A\to\X$, where \A\ is additive homological, since \A\ can be identified with $\Grp(\A)$ whenever \A\ is additive.

\item The reflection $\pi_0 : \Grp(\Cat)\to\Grp$, where \Cat\ is the category of (small) categories and $\pi_0$ sends internal groups in \Cat\ to the groups of isomorphism classes of their objects (and \Grp\ is identified with the category of internal groups in the category of discrete categories), is protoadditive but not a protolocalization. Moreover, replacing $\Grp(\Cat)$ with its isomorphic category $\Cat(\Grp)$, one can then prove protoadditivity for the more general reflection $\pi_0: \Cat(\C)\to\C$, where \C\ is any semi-abelian category in the sense of \cite{JMT} (recall that a homological category is semi-abelian when it is also Barr-exact and finitely cocomplete). This is done in \cite{EG1}, and, as shown in \cite{BCGS08}, this reflection is a protolocalization when \C\ is also arithmetical in the sense of \cite{Pe}. Let us also recall that $\Grp(\Cat) = \Grp(\Grpd)$, where \Grpd\ is the category of groupoids, can be identified (up to equivalence of categories) with the category $\mathsf{X}\Mod$ of crossed modules, and then $\pi_0$ will become the familiar cokernel functor to \Grp. The same can be done with any semi-abelian \C\ using internal crossed modules in the sense of \cite{J}.

\item The forgetful functor $F$ from the category $\Grp(\Top)$ of topological groups to \Grp, if \Grp\ is identified with the category of indiscrete topological groups; since $F$ has both a left and a right adjoint, it is a localization.

\item As shown in \cite{EG2}, the reflection $\Grp(\Top)\to \Grp(\Haus)$, considered in \cite{BC05}, where $\Grp(\Haus)$ is the category of Hausdorff topological groups, is a protoadditive reflection. However, it is obviously not a protolocalization: for, just apply it to, say, $0\to{\mathbb Q}\to{\mathbb R}\to{\mathbb R}/{\mathbb Q}\to 0$, where ${\mathbb Q}$ and ${\mathbb R}$ are the additive groups of rational and real numbers, respectively.

\item The reflection $\Grp(\Haus)\to\Grp(\TotDis)$, where $\Grp(\TotDis)$ is the category of totally disconnected topological groups, is another protoadditive reflection that is not a protolocalization; this time
    $0\to{\mathbb Z}\to{\mathbb R}\to{\mathbb R}/{\mathbb Z}\to 0$ provides the desired counter-example. The fact that the reflection $\Grp(\Haus)\to\Grp(\TotDis)$ is protoadditive can be shown in the same way as it is done in \cite{EG2} for the reflection from the category of compact Hausdorff semi-abelian algebras to the subcategory of compact totally disconnected semi-abelian algebras (using its Theorem 2.6).

\item Let $K$ be a (unital) commutative ring and $G$ a group. The group algebra $K[G]$ has a natural Hopf $K$-algebra structure whose comultiplication $\Delta: K[G]\to K[G]\otimes_K K[G]$ is defined by
$\Delta(g) = g\otimes g$, for all $g$ in $G$. Accordingly, for any Hopf $K$-algebra $H$, an element $h$ in $H$ is said to be a group-like element if $\Delta(h) = h\otimes h$ for the comultiplication $\Delta$ of $H$. Identifying the category of cocommutative Hopf $K$-algebras with the category $\Grp(\CCoAlg_K)$ of internal groups in the category of cocommutative $K$-coalgebras (see \cite{Ross} for instance), and identifying the category of Hopf $K$-algebras of the form $K[G]$ (for all groups $G$) with the category of groups, yields now the group-like element functor $\Grp(\CCoAlg_K) \to \Grp$. As shown in \cite{GKV} (see also \cite{GKV2}), when $K$ is a field of characteristic zero, $\Grp(\CCoAlg_K)$ is semi-abelian and this functor is a localization.
\end{enumerate}
\end{remark}

\section{Admissibility and attainability}

Let $\C$ be a category with pullbacks, $\X$ a full replete subcategory of $\C$, and $F :\C \to \X$ the reflection. Following \cite{J1984}, we shall call an object $A$ in $\C$ \emph{$F$-admissible} if the right adjoint of the induced functor
\[\xymatrix@!C=90pt{F^A : (\C\downarrow A) \ar[r]& (\X\downarrow F(A))}\]
is fully faithful, or, equivalently, for every morphism $f: X \to F(A)$ in $\X$, the canonical morphism
\begin{equation}
\label{AA.1}
\xymatrix@!C=90pt{F(A\times_{F(A)}X) \ar[r]& F(X) = X}
\end{equation}
is an isomorphism. The canonical morphism $A\to F(A)$ involved in the pullback above will be denoted by $\eta_A$, and, when $\C$ is pointed, the kernel of $\eta_A$ will be denoted by $\kappa_A : K(A)\to A$. From now on we will be working in a \emph{normal category} \C\ (in the sense of Z. Janelidze \cite{J2010}); that is, \C\ is a pointed regular category where every regular epimorphism is a normal epimorphism. The following theorem generalizes Theorem 3.1 of \cite{J1989}:

\begin{theorem}
\label{thrm:AA.1}
If $\C$ is a normal category and $F : \C \to \X$ is a (regular epi)-reflection (=(normal epi)-reflection), then the following conditions on an object $A$ in $\C$ are equivalent:
\begin{enumerate}[\em (i)]
\item $A$ is $F$-admissible;
\item $F(K(A)) = 0$;
\item $K(K(A)) \cong K(A)$ canonically.
\end{enumerate}
Moreover, if these conditions hold, and $A$ admits a short exact sequence $\xymatrix@!C=5pt{0\ar[r]&X\ar[r]&A\ar[r]&Y\ar[r]&0}$ in $\C$ with $X$ and $Y$ in $\X$, then $A$ is in $\X$.
\end{theorem}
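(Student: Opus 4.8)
My plan is to establish the equivalences $\mathrm{(i)}\Leftrightarrow\mathrm{(ii)}\Leftrightarrow\mathrm{(iii)}$ and then deduce the final assertion from $\mathrm{(ii)}$. Throughout I identify $F(X)$ with $X$ and $\eta_X$ with $\id_X$ for objects $X$ of $\X$ (legitimate since $\eta_X$ is then an isomorphism), and I use freely that $\eta_A$ is a normal epimorphism, that $F$ preserves normal $(=\text{regular})$ epimorphisms as a left adjoint, and that normal epimorphisms are pullback-stable in the regular category $\C$. The equivalence $\mathrm{(ii)}\Leftrightarrow\mathrm{(iii)}$ is immediate: if $F(K(A))=0$ then $\eta_{K(A)}$ is the zero map and its kernel $\kappa_{K(A)}$ is $\id_{K(A)}$, giving $\mathrm{(iii)}$; conversely, as $\C$ is normal $\eta_{K(A)}=\coker(\kappa_{K(A)})$, so if $\kappa_{K(A)}$ is an isomorphism its cokernel $F(K(A))$ vanishes. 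For $\mathrm{(i)}\Rightarrow\mathrm{(ii)}$ I apply admissibility to the morphism $0\to F(A)$ of $\X$: its pullback along $\eta_A$ is $\kappa_A\colon K(A)\to A$, so the canonical comparison \eqref{AA.1} becomes $F(K(A))\to F(0)=0$, which is then an isomorphism.

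The crux is $\mathrm{(ii)}\Rightarrow\mathrm{(i)}$. Fix $f\colon X\to F(A)$ in $\X$ and form the pullback $P=A\times_{F(A)}X$ with projection $p_2\colon P\to X$. By pullback-stability $p_2$ is a normal epimorphism, and pasting of pullbacks identifies its kernel with $K(A)$, via an inclusion $\iota\colon K(A)\to P$; thus $0\to K(A)\xrightarrow{\iota}P\xrightarrow{p_2}X\to0$ is short exact and $p_2=\coker(\iota)$. Naturality of $\eta$ gives $\eta_P\cdot\iota=F(\iota)\cdot\eta_{K(A)}$, and since $F(K(A))=0$ the right-hand side is zero; hence $\eta_P\cdot\iota=0$, and $\eta_P$ factors through the cokernel as $\eta_P=g\cdot p_2$ for a unique $g\colon X\to F(P)$. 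Using $F(p_2)\cdot\eta_P=p_2$ together with the fact that $p_2$ and $\eta_P$ are epimorphisms, a one-line chase gives $F(p_2)\cdot g=\id_X$ and $g\cdot F(p_2)=\id_{F(P)}$, so the canonical comparison $F(p_2)\colon F(P)\to X$ is an isomorphism and $A$ is $F$-admissible.

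For the final assertion, suppose $0\to X\xrightarrow{x}A\xrightarrow{y}Y\to0$ is short exact with $X,Y\in\X$ and that $\mathrm{(ii)}$ holds; it suffices to prove $K(A)=0$, for then $\eta_A$ is both a monomorphism and a normal epimorphism, hence an isomorphism, so $A\in\X$. Since $\eta_Y$ is monic and $\eta_Y\cdot y\cdot\kappa_A=F(y)\cdot\eta_A\cdot\kappa_A=0$, we get $y\cdot\kappa_A=0$, so $\kappa_A$ factors through $x=\ker(y)$ as $\kappa_A=x\cdot\bar\kappa$ with $\bar\kappa\colon K(A)\to X$ monic. Applying $\eta$ again, $\eta_X\cdot\bar\kappa=F(\bar\kappa)\cdot\eta_{K(A)}=0$ because $F(K(A))=0$, and $\eta_X$ monic forces $\bar\kappa=0$; a monomorphism that is also the zero map has trivial domain, whence $K(A)=0$.

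The step I expect to be the main obstacle is $\mathrm{(ii)}\Rightarrow\mathrm{(i)}$, because $F$ is not assumed to preserve short exact sequences, so one cannot simply apply $F$ to the pulled-back sequence and read off exactness at $F(P)$. The point that unlocks it is that $F(K(A))=0$ forces the unit $\eta_P$ to kill the kernel $\iota$, allowing $\eta_P$ to descend along the cokernel $p_2$; the resulting map is then inverse to the canonical comparison. The same mechanism -- factoring a kernel inclusion through an object of $\X$ on which the unit is invertible -- drives the final assertion.
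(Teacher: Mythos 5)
Your proof is correct and takes essentially the same route as the paper: (i)$\Rightarrow$(ii) by specializing the comparison \eqref{AA.1} to $X=0$, (ii)$\Rightarrow$(i) via the exact sequence $K(A)\to A\times_{F(A)}X\to X\to 0$ obtained from pullback-stability of normal epimorphisms, and the final assertion by factoring $\kappa_A$ through $X\to A$ and using $F(K(A))=0$ to force $K(A)=0$. The only differences are cosmetic: you verify that $F(p_2)$ is invertible by an explicit diagram chase where the paper simply cites that $F$, being a left adjoint, preserves cokernel diagrams, and in the last step you use naturality of the unit where the paper quotes closure of $\X$ under normal subobjects (and note that ``$\eta_A$ is a monomorphism'' is best justified directly: $\eta_A=\coker(\kappa_A)=\coker(0\to A)$ is already an isomorphism).
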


\begin{proof}(i) $\Rightarrow$ (ii): Putting $X = 0$ in \eqref{AA.1} we obtain $F(K(A)) = F(A\times_{F(A)}0) = 0$ by (i).

(ii) $\Leftrightarrow$ (iii) is trivial.

(ii) $\Rightarrow$ (i): For a morphism $f : X \to F(A)$ in $\X$, we need to prove that (ii) implies that the canonical morphism \eqref{AA.1} is an isomorphism. For that we observe that since $\eta_A$ is a normal epimorphism, so is the pullback projection $p : A\times_{F(A)}X \to X$, and, since these two morphisms have isomorphic kernels, this gives a cokernel diagram
\begin{equation}
\label{AA.2}
\xymatrix@!C=48pt{K(A)\ar[r]& A\times_{F(A)}X\ar[r]^-p& X \ar[r] & 0.}
\end{equation}
Since $F$ is a left adjoint, it preserves cokernel diagrams, and so (ii) implies that $F(p)$ is an isomorphism, as desired.

To prove the last assertion of the theorem, we observe:
\begin{enumerate}
\item	Since $F$ is an epi-reflection, $\X$ is closed under normal subobjects in $\C$.
\item	The existence of a short exact sequence $\xymatrix@!C=5pt{0\ar[r]&X\ar[r]&A\ar[r]&Y\ar[r]&0}$ implies that $\kappa_A : K(A)\to A$ factors through $X\to A$.
\item	(1) and (2) together imply that $K(A)$ is in $\X$.
\item	(3) and (ii) together imply $K(A) = 0$.
\item	Since $F$ is a (regular epi)-reflection, (4) implies that $A$ is in $\X$.
\end{enumerate}
\end{proof}

\begin{remark}\label{rem:AA.2}
\begin{enumerate}
\item As observed in \cite{CJKP1997} any reflection $F : \C\to\X$ to a full subcategory has all objects $A$ in $\C$ admissible if and only if it is semi-left-exact in the sense of \cite{CHK}. On the other hand, for any functor $F : \C\to\X$, having a fully faithful right adjoint for $F^A : (\C\downarrow A)\to(\X\downarrow F(A))$ for every $A$ in $\C$ is equivalent to being (essentially) a fibration (cf. Proposition 36 of \cite{BCGS08}).

\item As easily follows from Proposition 19 of \cite{BCGS08}, the Galois theory of $F$ (see e.g. \cite{J1984, J1989}) becomes \emph{trivial} when $F$ is a protolocalization -- in the sense that its Galois groupoids become effective equivalence relations, and all coverings are trivial. This is not the case, however, for protoadditive reflections, for which it is often possible to provide an explicit and simple description of the (non-trivial) Galois groups in terms of Hopf formulae \cite{EG2}.
 \end{enumerate}
 \end{remark}
\begin{theorem}
 \label{thrm:AA.3}
If $\C$ is a normal category, $F : \C\to\X$ is a (regular epi)-reflection, and $\X$ is extension closed in $\C$, then the equivalent conditions {\em (i)}--{\em (iii)} of Theorem \ref{thrm:AA.1} on $A$ hold whenever $K(K(A))$ is (canonically) a normal subobject of $A$ (that is, whenever the composite of
$K(K(A))\to K(A)\to A$ is a normal monomorphism).
\end{theorem}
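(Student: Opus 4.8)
The plan is to deduce condition (ii) of Theorem~\ref{thrm:AA.1}, namely $F(K(A))=0$. Write $M=K(A)$, $L=K(K(A))$ and $Q=F(K(A))$. Since $F$ is a (normal epi)-reflection, the units $\eta_A$ and $\eta_M$ are normal epimorphisms whose kernels are, by definition, $M$ and $L$; hence there are short exact sequences $0\to M\to A\to F(A)\to 0$ and $0\to L\to M\to Q\to 0$ in $\C$, so that $F(A)\cong A/M$ and $Q\cong M/L$, and both $F(A)$ and $Q$ lie in $\X$.

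First I would exploit the hypothesis that $L\to A$ is a normal monomorphism to form its cokernel $q\colon A\to A/L$. As $L\subseteq M=\ker(\eta_A)$, the unit $\eta_A$ factors uniquely as $\eta_A=r\cdot q$ for a (necessarily normal epi) morphism $r\colon A/L\to F(A)$. The crucial step is then to identify the kernel of $r$: using that $q$ is a regular epimorphism and the pasting law for the pullbacks computing kernels, one gets $q^{-1}(\ker r)=\ker(r\cdot q)=M$, whence $\ker r=q\bigl(q^{-1}(\ker r)\bigr)=q(M)\cong M/L\cong Q$. This produces a third short exact sequence $0\to Q\to A/L\xrightarrow{\,r\,}F(A)\to 0$ in $\C$, which is precisely the Noether ``third isomorphism'' attached to the chain $L\subseteq M\subseteq A$.

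With this sequence in hand, its two outer terms $Q$ and $F(A)$ both lie in $\X$, so the assumption that $\X$ is extension closed immediately yields $A/L\in\X$. Now I would feed $A/L\in\X$ back into the universal property of the reflection: the morphism $q\colon A\to A/L$ factors through the unit as $q=\bar q\cdot\eta_A$ for a unique $\bar q\colon F(A)\to A/L$. Combining this with $\eta_A=r\cdot q$ and cancelling the epimorphisms $\eta_A$ and $q$ gives $r\cdot\bar q=1$ and $\bar q\cdot r=1$, so $r$ is an isomorphism. Consequently $Q=\ker r=0$, which is exactly condition (ii); Theorem~\ref{thrm:AA.1} then also supplies (i) and (iii).

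The main obstacle I anticipate is the middle step: verifying that $0\to Q\to A/L\to F(A)\to 0$ is genuinely short exact in the generality of a \emph{normal} category, i.e. that $\ker r\cong F(K(A))$. In a homological category this is a routine isomorphism theorem, but here it must be justified directly from regularity together with the coincidence of regular and normal epimorphisms, via the two identities $q^{-1}(\ker r)=M$ and $q\bigl(q^{-1}(\ker r)\bigr)=\ker r$. Once that sequence is established, extension closedness and the universal property of $\eta_A$ make the remainder purely formal.
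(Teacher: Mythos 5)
Your proof is correct and is essentially the paper's own argument: the short exact sequence $0\to F(K(A))\to A/K(K(A))\to F(A)\to 0$ you construct is exactly the lower row of the paper's diagram, which the paper obtains by citing the double quotient isomorphism theorem for normal categories (Lemma 1.3 of \cite{monotone}) --- precisely the ``main obstacle'' you flag --- and then, just as you do, it applies extension closedness of $\X$ to get $A/K(K(A))\in\X$ and the universal property of $\eta_A$ to conclude. The only cosmetic difference is that you land on condition (ii) of Theorem \ref{thrm:AA.1} by showing $r$ is an isomorphism, whereas the paper deduces condition (iii) directly.
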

\begin{proof}
Take any $A$ in $\C$ and consider the commutative diagram
\begin{equation}
\label{AA.3}
\xymatrix@!C=48pt{
& 0 \ar[d]  \\
&K(K(A))\ar[dr] \ar[d] & & &\\
0\ar[r]&K(A)\ar[r] \ar[d] &A\ar[r]^{\eta_A} \ar[d] &\frac{A}{K(A)} \ar[r] \ar@{=}[d] &0 \\
& \frac{K(A)}{K(K(A))} \ar[d]  \ar[r] & \frac{A}{K(K(A))} \ar[r] & \frac{A}{K(A)} \\
& 0
}
\end{equation}
whose middle row is a short exact sequence, and the quotients in the lower row are the obvious ones. The double quotient isomorphism theorem for normal categories (see Lemma $1.3$ in \cite{monotone})
implies that the lower row is also exact.
Since $\frac{K(A)}{K(K(A))} \cong F(K(A))$ and $\frac{A}{K(A)} = F(A)$, the extension closedness of $\X$ implies that $\frac{A}{K(K(A))}$ is in $\X$. By the universal property of $\eta_A: A\to F(A)$ this implies that
$K(K(A))\cong K(A)$ canonically.
\end{proof}

From this theorem and the last assertion of Theorem \ref{thrm:AA.1}, we obtain the following result (see also the Corollary in \cite{JT}):

\begin{corollary}
\label{cor:AA.4}
Suppose $F : \C\to  \X$ is as in Theorem \ref{thrm:AA.1} and $\C$ is a normal category. Then the following conditions are equivalent:
\begin{enumerate}[\em (i)]
\item every object $A$ of $\C$ satisfies the equivalent conditions of Theorem \ref{thrm:AA.1}, or, equivalently, $F : \C\to\X$ is semi-left-exact;
\item $\X$ is extension closed in $\C$ and, for every $A$ in $\C$, $K(K(A))$ is a normal subobject of $A$.
\end{enumerate}
\end{corollary}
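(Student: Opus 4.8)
The plan is to prove Corollary \ref{cor:AA.4} by combining the two preceding theorems, treating the two implications separately.

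For the implication (i) $\Rightarrow$ (ii), I would argue as follows. Assuming that every object satisfies the equivalent conditions of Theorem \ref{thrm:AA.1}, I first need to establish that $\X$ is extension closed. The natural route is to take a short exact sequence $\xymatrix@!C=5pt{0\ar[r]&X\ar[r]&A\ar[r]&Y\ar[r]&0}$ with $X$ and $Y$ in $\X$, and apply the last assertion of Theorem \ref{thrm:AA.1}: since $A$ satisfies conditions (i)--(iii) by hypothesis, and $A$ admits such a short exact sequence with both outer terms in $\X$, we conclude $A\in\X$. This is exactly extension closedness. Second, for the normal-subobject condition on $K(K(A))$, I would use condition (iii): it asserts $K(K(A))\cong K(A)$ canonically, and $K(A)=\ker(\eta_A)$ is a normal subobject of $A$ by construction (it is a kernel). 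Hence $K(K(A))$, being canonically identified with $K(A)$, is a normal subobject of $A$.

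For the converse (ii) $\Rightarrow$ (i), the work is essentially done by Theorem \ref{thrm:AA.3}. Assuming $\X$ is extension closed and $K(K(A))$ is a normal subobject of $A$ for every $A$, Theorem \ref{thrm:AA.3} directly yields that every $A$ satisfies conditions (i)--(iii) of Theorem \ref{thrm:AA.1}. The equivalence of this with semi-left-exactness of $F$ is the content of Remark \ref{rem:AA.2}(1), which identifies ``all objects admissible'' with ``$F$ semi-left-exact'' in the sense of \cite{CHK}. So this direction reduces to citing Theorem \ref{thrm:AA.3} and invoking the cited characterization of semi-left-exactness.

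The main obstacle I anticipate is making sure the normal-subobject hypothesis is handled symmetrically and honestly in both directions: in (i) $\Rightarrow$ (ii) I must confirm that the canonical identification $K(K(A))\cong K(A)$ from condition (iii) genuinely carries the information that the composite $K(K(A))\to K(A)\to A$ is a normal monomorphism, rather than merely a monomorphism, and this hinges on the fact that $K(A)$ is itself a kernel; in (ii) $\Rightarrow$ (i) the delicate point is that Theorem \ref{thrm:AA.3} already packages the required double-quotient and extension-closedness argument, so I only need to verify that its hypotheses match ours verbatim. Everything else is a matter of quoting the two theorems and the admissibility-versus-semi-left-exactness dictionary; no new diagram chase should be needed beyond what Theorems \ref{thrm:AA.1} and \ref{thrm:AA.3} already supply.
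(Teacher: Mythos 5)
Your proposal is correct and is exactly the argument the paper intends: the paper derives the corollary by combining Theorem \ref{thrm:AA.3} (which gives (ii) $\Rightarrow$ (i)) with the last assertion of Theorem \ref{thrm:AA.1} (which gives extension closedness in (i) $\Rightarrow$ (ii)), and your observation that condition (iii) identifies $K(K(A))\to K(A)$ with an isomorphism onto the kernel $\kappa_A$ of $\eta_A$ --- hence a normal monomorphism --- correctly supplies the remaining half of (i) $\Rightarrow$ (ii). Your invocation of Remark \ref{rem:AA.2}(1) for the semi-left-exactness dictionary also matches the paper.
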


The assumptions of Theorems \ref{thrm:AA.1} and \ref{thrm:AA.3} and of Corollary \ref{cor:AA.4} hold whenever $\C$ is a semi-abelian category and $\X$ is a Birkhoff subcategory of $\C$, that is a full (regular epi)-reflective subcategory of $\C$ that is stable regular quotients. In particular, they hold when $\C$ is a variety of \emph{$\Omega$-groups} (=groups with multiple operators in the sense of P. J. Higgins \cite{H1956}), and $\X$ a subvariety of $\C$.

In order to state the next proposition let us recall that a set $\Phi$ of identities is \emph{attainable}  - in the sense of T. Tamura \cite{T1966} - on an $\Omega$-group $A$ if $A(A(\Phi)) = A(\Phi)$, where $$A(\Phi) =\cap  \{ I \lhd A \, \mid \, A/I  {\, \rm satisfies \, } \varphi, \forall \varphi \in \Phi \}$$
and $I \lhd A$ means that $I$ is an ideal of $A$.
We then have the following theorem, which in fact follows from previously known results, including Theorem 3.1 of \cite{J1989}:

\begin{proposition}
\label{thrm:AA.5} If $\C$ is a variety of $\Omega$-groups and $\X$ a subvariety of $\C$ determined by a set $\Phi$ of identities, then the equivalent conditions of Theorem \ref{thrm:AA.1} on an object $A$ in $\C$ are also equivalent to each of the following conditions:
\begin{enumerate}
\item[\em (iii)] $\Phi$ is attainable on $A$ in the sense T. Tamura ;
\item[\em (iv)] $A$ is $\X$-attainable in the sense of A. Mal'tsev \cite{M1967}.
\end{enumerate}
\end{proposition}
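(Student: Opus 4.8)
The plan is to reduce both new conditions, (iii) and (iv), to condition (iii) of Theorem \ref{thrm:AA.1} by recognizing that Tamura's ideal, Mal'tsev's radical, and the categorical kernel $K(A)$ all name the same normal subobject. The first and decisive step I would carry out is to identify $K(A)$ with $A(\Phi)$. Since $\C$ is a variety of $\Omega$-groups, it is ideal determined, so congruences on an object correspond bijectively to its ideals; consequently the reflection of $A$ onto the subvariety $\X$ defined by $\Phi$ is the largest quotient of $A$ satisfying $\Phi$, namely $F(A) = A/A(\Phi)$. Reading off the kernel of the unit then gives $K(A) = \ker(\eta_A) = A(\Phi)$.

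Granting this identification, condition (iii) of the proposition is essentially a translation. Applying the same description to the $\Omega$-group $A(\Phi)$ itself, I get $A(A(\Phi)) = K(A(\Phi)) = K(K(A))$, the composite $K(K(A)) \to K(A) \to A$ being the canonical inclusion of ideals. Hence Tamura's attainability $A(A(\Phi)) = A(\Phi)$ reads precisely as $K(K(A)) = K(A)$, which is condition (iii) of Theorem \ref{thrm:AA.1}: because the comparison map is the inclusion, equality of these subobjects is the same as the canonical map being an isomorphism. (This simultaneously recovers condition (ii), since $F(K(A)) \cong K(A)/K(K(A))$.)

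For Mal'tsev's $\X$-attainability I would unwind the definition through the iterated radical. Setting $r_0(A) = A$ and $r_{n+1}(A) = K(r_n(A))$ produces a descending chain $A \supseteq K(A) \supseteq K(K(A)) \supseteq \cdots$ of ideals, and $A$ is $\X$-attainable exactly when this chain already stabilizes after the first step, i.e. $K(K(A)) = K(A)$; alternatively one may simply invoke the classical coincidence of Tamura's and Mal'tsev's notions of attainability for $\Omega$-groups to obtain (iii) $\Leftrightarrow$ (iv) at once. Either route lands again on the single condition $K(K(A)) = K(A)$, completing the equivalence with Theorem \ref{thrm:AA.1}.

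I expect the main obstacle to be not computational but definitional: carefully matching the three formulations—the categorical $K(K(A)) \cong K(A)$, Tamura's $A(A(\Phi)) = A(\Phi)$, and Mal'tsev's $\X$-attainability—and confirming that $A(\Phi)$, the Mal'tsev radical, and $K(A)$ all denote the same ideal of $A$. In particular, Mal'tsev's condition is phrased via his multiplication of classes rather than via a reflection, so the delicate point is verifying that it genuinely collapses to the first-step stabilization of the kernel chain. Once that is checked, the proposition follows from the equivalences already established in Theorem \ref{thrm:AA.1}.
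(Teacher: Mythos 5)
Your proposal is correct and takes essentially the same route as the paper: the paper's proof likewise consists of noting that the equivalence of (iii) with condition (ii)/(iii) of Theorem \ref{thrm:AA.1} ``immediately follows from Tamura's definitions'' (resting on exactly the identification $K(A)=A(\Phi)$ that you make explicit, since $F(A)=A/A(\Phi)$), and of disposing of (iii) $\Leftrightarrow$ (iv) by citing the well-known coincidence of Tamura's and Mal'tsev's notions for arbitrary varieties. You merely spell out details the paper treats as immediate or folklore.
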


\begin{proof}
 (iii) $\Leftrightarrow$ (iv) is well-known in the more general context of arbitrary varieties (in fact Tamura introduces all definitions for semigroups, but then mentions that they can be copied for arbitrary varieties of algebras). The fact that (iii) is equivalent to condition (ii) of Theorem \ref{thrm:AA.1} immediately follows from Tamura's definitions.
 \end{proof}

Following Tamura, Mal'tsev, and many other authors of the next generation, one would call $\X$ \emph{attainable} if all objects of $\C$ are $\X$-attainable. Actually we have
\begin{equation}\label{AA.5}
\text{semisimple} \;\Leftrightarrow \; \text{attainable}\; \Leftrightarrow\; \text{admissible}\; \Leftrightarrow\; \text{semi-left-exact}\; \Leftrightarrow\; \text{fibered}
\end{equation}
in the sense of:

\begin{theorem}\label{thrm:AA.6}
If $\C$ is a variety of $\Omega$-groups and $\X$ a subvariety of $\C$, then the following conditions are equivalent:
\begin{enumerate}[\em (i)]
\item $\X$ is a semisimple class in $\C$ in the sense of Kurosh-Amitsur radical theory;
\item $\X$ is an attainable subvariety of $\C$;
\item every object $A$ in $\C$ is $F$-admissible;
\item the reflection $F : \C\to\X$ is semi-left-exact;
\item the reflection $F : \C\to\X$ is (essentially) a fibration.
\end{enumerate}
\end{theorem}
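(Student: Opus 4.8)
The plan is to dispatch the four ``pointwise'' conditions (ii)--(v) first, since they are all equivalent to $F$-admissibility holding at every object, and then to spend the real effort connecting them to the radical-theoretic condition (i). For a fixed $A$, Proposition \ref{thrm:AA.5} gives the equivalence of $F$-admissibility with $\X$-attainability, so quantifying over all $A$ yields (ii) $\Leftrightarrow$ (iii). Remark \ref{rem:AA.2}(1) says that ``all objects admissible'' coincides with $F$ being semi-left-exact, giving (iii) $\Leftrightarrow$ (iv), and that requiring $F^A$ to possess a fully faithful right adjoint for every $A$ --- which is exactly (iii) --- is the same as $F$ being (essentially) a fibration, giving (iii) $\Leftrightarrow$ (v). Finally, by Theorem \ref{thrm:AA.1} admissibility of $A$ is $K(K(A))\cong K(A)$; writing $K(A)=A(\Phi)$ for the verbal ideal cut out by $\Phi$, all of (ii)--(v) thus amount to the single requirement that $K(K(A))=K(A)$ for every $A$, i.e.\ to idempotency of the subfunctor $K$.

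It then remains to prove (i) $\Leftrightarrow$ (iii). I would introduce the candidate radical class $\mathcal{R}=\{A\in\C\mid F(A)=0\}=\{A\mid K(A)=A\}$ together with the associated semisimple class $\mathcal{S}_{\mathcal{R}}=\{A\mid r(A)=0\}$, where $r(A)$ is the sum of all ideals of $A$ lying in $\mathcal{R}$. The first task is to verify that $\mathcal{R}$ is a bona fide Kurosh--Amitsur radical class. Closure under quotients is immediate, since $F$ preserves regular epimorphisms. Closure under extensions and under sums of $\mathcal{R}$-ideals both exploit that $\X$, being a subvariety, is closed under subalgebras (hence ideals) and under quotients: given $I\lhd A$ with $I,A/I\in\mathcal{R}$, a hypothetical nonzero $\X$-quotient $q\colon A\to F(A)$ would send $I$ to a subobject of $F(A)\in\X$ that is also a quotient of the radical object $I$, hence $q(I)=0$, so $q$ factors through $A/I$, contradicting $A/I\in\mathcal{R}$; the same ``kill each piece'' argument shows a sum of $\mathcal{R}$-ideals has no nonzero $\X$-quotient. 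I would also record the inclusion $\X\subseteq\mathcal{S}_{\mathcal{R}}$, valid unconditionally: for $A\in\X$ any $I\lhd A$ with $I\in\mathcal{R}$ is a subalgebra of $A\in\X$, hence $I\in\X$, hence $I=F(I)=0$, so $r(A)=0$.

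The loop is then closed using idempotency. If (iii) holds then $K(A)\in\mathcal{R}$ for every $A$ (because $F(K(A))=K(A)/K(K(A))=0$), so for $A\in\mathcal{S}_{\mathcal{R}}$ we get $K(A)\subseteq r(A)=0$, whence $A\in\X$; thus $\X=\mathcal{S}_{\mathcal{R}}$ is a semisimple class, proving (i). Conversely, if (i) holds, the Galois correspondence between radical and semisimple classes forces the radical of $\X$ to be precisely $\mathcal{R}$ (an object has a nonzero homomorphic image in $\X$ iff $F(A)\neq 0$), so $\X=\mathcal{S}_{\mathcal{R}}$; then $K(A)\subseteq r(A)$ since $A/r(A)\in\X$, and $r(A)\subseteq K(A)$ since $r(A)\in\mathcal{R}$ has trivial image in $F(A)\in\X$, giving $K=r$ as operators, and the idempotency $r(r(A))=r(A)$ of the radical delivers $K(K(A))=K(A)$, i.e.\ (iii).

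The main obstacle I anticipate is the first task of the middle step: checking that $\mathcal{R}$ is genuinely a radical class, i.e.\ verifying closure under extensions and under arbitrary sums of ideals and the existence of a largest radical ideal. This is exactly where the specific features of $\Omega$-groups are used --- that images of ideals along surjections are again ideals, that ideals of a member of a subvariety remain in that subvariety, and that the join of radical ideals exists --- and it must be done with some care. A secondary delicate point is the identification $K=r$ under hypothesis (i), which rests on matching the categorical operator $K$ with the radical operator $r$ via the two inclusions above; one may alternatively route this through extension-closedness and Corollary \ref{cor:AA.4}, noting that for $\Omega$-groups $K(K(A))$ is automatically a (fully invariant, hence normal) subobject of $A$, so that Corollary \ref{cor:AA.4} collapses to ``$F$ semi-left-exact $\Leftrightarrow$ $\X$ extension closed'', matching the classical description of semisimple classes as the extension-closed (hereditary, subdirect-closed) subvarieties.
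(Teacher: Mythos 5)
Your handling of (ii)--(v) coincides with the paper's: Proposition \ref{thrm:AA.5} gives (ii) $\Leftrightarrow$ (iii) objectwise, and Remark \ref{rem:AA.2} gives (iii) $\Leftrightarrow$ (iv) and (iii) $\Leftrightarrow$ (v). For the radical-theoretic part you take a genuinely different route. The paper dispatches (i) $\Leftrightarrow$ (ii) by citing Mlitz's axiomatic characterization of semisimple classes of $\Omega$-groups \cite{M1980}: of his three conditions, the first two (closure under subdirect products; every nonzero kernel of a morphism out of a member of $\X$ has a nonzero quotient in $\X$) hold trivially for a subvariety, and the third is exactly condition (iii) of Theorem \ref{thrm:AA.1}, i.e.\ attainability --- so the proof is essentially a citation. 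You instead reconstruct the correspondence by hand: you form the upper radical $\mathcal{R}=\{A\mid F(A)=0\}$, verify the Kurosh--Amitsur axioms directly (your ``kill each piece'' arguments for extension- and sum-closure are correct, resting on the facts that a subvariety is closed under ideals and quotients, that images of ideals under surjections are ideals, and that in $\Omega$-groups the join of ideals is generated by their union), prove $\X\subseteq\mathcal{S}_{\mathcal{R}}$ unconditionally, and close the loop via idempotency of $K$ in one direction and the identification $K=r$ plus $r(r(A))=r(A)$ in the other. This is correct and self-contained where the paper leans on the radical-theory literature; the price is precisely the bookkeeping you flag as the main obstacle.

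One genuine error, though it does not damage your main argument since you offer it only as an alternative: the claim that for $\Omega$-groups $K(K(A))$ is ``fully invariant, hence normal'' in $A$, so that Corollary \ref{cor:AA.4} collapses to ``semi-left-exact $\Leftrightarrow$ extension closed''. Full invariance of $K(K(A))$ in $A$ is true (the universal-property argument works), but in a general $\Omega$-group variety a fully invariant subalgebra need not be an ideal --- the implication ``fully invariant $\Rightarrow$ normal'' is special to groups, where conjugations are endomorphisms; already for rings, multiplication by a fixed element is not an endomorphism. Indeed, the paper's Remark \ref{rem:AA.7} emphasizes that the normality hypothesis on $K(K(A))\to A$ in Theorem \ref{thrm:AA.3} \emph{cannot} be removed for general $\Omega$-groups, citing Gardner \cite{G1980} and Veldsman \cite{V1988} for extension-closed subvarieties (e.g.\ of algebras with involution) that are not attainable; these are direct counterexamples to your claimed collapse, and likewise to the parenthetical ``classical description'' of semisimple classes as the extension-closed subvarieties. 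So the alternative route should be deleted; your primary derivation of $K=r$ from the two inclusions, followed by idempotency of the radical, is the argument that stands.
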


\begin{proof}
(i) $\Leftrightarrow$ (ii) is `folklore', but since we could not find it explicitly mentioned in the literature, let us explain:

According to (4.2) of Theorem 4 in \cite{M1980} of R. Mlitz, (i) means that the following conditions hold:
\begin{enumerate}
\item[(i1)] $\X$ is closed in $\C$ under subdirect products;
\item[(i2)] if $X$ is in $\X$ and $Y = \Ker(f)\neq 0$ for some morphism $f$ with domain $X$, then $Y$ has a non-zero quotient in $\X$;
\item[(i3)] every $A$ in $\C$ satisfies condition (iii) of Theorem \ref{thrm:AA.1}.
\end{enumerate}
Since conditions (i1) and (i2) trivially follow from the fact that $\X$ is a subvariety of $\C$, this proves (i) $\Leftrightarrow$ (ii). For (ii) $\Leftrightarrow$ (iii) and (ii) $\Leftrightarrow$ (iv) see Remark \ref{rem:AA.2}. \end{proof}

\begin{remark}
 \label{rem:AA.7}
\begin{enumerate}
\item The algebraic versions of our arguments used in the proofs of the last assertion of Theorem \ref{thrm:AA.1} and of Theorem \ref{thrm:AA.3} are known in more general contexts of universal algebra and radical theory. The known context for attainability implying extension closedness is especially general, and the result goes back to the above-mentioned paper \cite{M1967}. On the other hand, our proof of Theorem \ref{thrm:AA.3} restricted to $\Omega$-groups is nothing but a simplified version of arguments used in \cite{M1980}. It would be interesting to remove the assumption on normality of the monomorphism $K(K(A))\to A$ in that theorem. It indeed can be removed when $\C$ is the variety of (associative) rings, as shown by R. Wiegandt \cite{W1974} and in some other cases (see Section 3.20 of B. J. Gardner and R. Wiegandt \cite{GW2004} and references at the end of it). But it cannot be removed even in some closely related categories (see e.g. B. J. Gardner \cite{G1980} and S. Veldsman \cite{V1988}). In the case of groups this is trivial since there are no extension closed (proper) subvarieties of the variety of groups, as observed in \cite{M1967} referring to results of B. H., H., and P. M. Neumann \cite{NNN1962}.

\item In the case of (associative non-unital) rings, we obtain an interesting conclusion: the (quite non-trivial!) complete description of extension closed varieties of rings, due to B. J. Gardner and P. N. Stewart \cite{GS1975}, also repeated in the book \cite{GW2004}, can also be considered as the complete description of semi-left-exact reflections from the variety of rings to its subvarieties.
\end{enumerate}
\end{remark}

Applying a part of Theorem 42 of \cite{BCGS08} to the special situation where $\C$ is a variety of $\Omega$-groups, we obtain the following result:
\begin{proposition}\label{thrm:AA.8}
If $\C$ is a variety of $\Omega$-groups and $\X$ a subvariety of $\C$, then the following conditions are equivalent:
\begin{enumerate}[\em (i)]
\item $\X$ is a semisimple class in $\C$ in the sense of Kurosh-Amitsur radical theory whose corresponding radical class is hereditary;
\item the reflection $F : \C\to\X$ is a (regular epi)-reflective protolocalization.
\end{enumerate}
\end{proposition}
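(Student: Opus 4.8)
The plan is to prove the two implications separately, after reducing everything to a single categorical statement: that $F$ is a protolocalization exactly when it carries normal monomorphisms to normal monomorphisms. Throughout I write $K(A)=\ker\eta_A$ for the radical of $A$, so that $F(A)=A/K(A)$. By Theorem \ref{thrm:AA.6}, $\X$ being a semisimple class is equivalent to $F$ being semi-left-exact (equivalently, all objects $F$-admissible), so the genuine content of the proposition is that, once this holds, heredity of the radical is the exact extra ingredient that distinguishes a protolocalization from a mere semi-left-exact reflection. Semisimplicity in (i) also guarantees that $K$ is an idempotent (genuine) radical, so that the phrase ``hereditary radical class'' is meaningful.

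First I would record the reformulation. Since $F$ is a left adjoint it preserves cokernels, and for a short exact sequence $0\to K\xrightarrow{k}A\xrightarrow{f}B\to 0$ in $\C$ we have $f=\coker(k)$, hence $F(f)=\coker(F(k))$ in $\X$. Therefore the image sequence $0\to F(K)\xrightarrow{F(k)}F(A)\xrightarrow{F(f)}F(B)\to 0$ is exact in $\X$ if and only if $F(k)=\ker(\coker F(k))$, i.e. if and only if $F(k)$ is a normal monomorphism of $\X$. Thus $F$ is a protolocalization precisely when it sends every ideal inclusion $k\colon K\to A$ (with $K\lhd A$) to a normal monomorphism of $\X$.

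Next I would translate this into radical-theoretic terms. For an ideal $K\lhd A$, the map $F(k)\colon K/K(K)\to A/K(A)$ has kernel $(K\cap K(A))/K(K)$ and image $(K+K(A))/K(A)$. One always has $K(K)\subseteq K\cap K(A)$, because $K/(K\cap K(A))$ embeds into $F(A)\in\X$ and $\X$ is closed under subalgebras, so the reflection $\eta_K$ forces $K(K)\subseteq K\cap K(A)$. Hence $F(k)$ is a monomorphism if and only if $K(K)=K\cap K(A)$, and demanding this for every ideal $K\lhd A$ is exactly the pointwise form $K(I)=I\cap K(A)$ of heredity, which for a subvariety is equivalent to the radical class being closed under ideals (the standard Kurosh--Amitsur translation, compatible with \cite{M1967, M1980}). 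The crucial point — and the reason the subvariety hypothesis is indispensable — is that once $F(k)$ is mono it is automatically \emph{normal}: its image $(K+K(A))/K(A)$ is an ideal of $F(A)$ in $\C$, and since $\X$ is closed under quotients the cokernel $A/(K+K(A))$ lies in $\X$, so this ideal inclusion is a kernel in $\X$ as well. This is precisely what fails in Remark \ref{rem:protoloc}, where $F'(k)$ is mono but $\Ab_\tf$ is not closed under quotients, so $F'(k)$ is not normal.

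With these reformulations the equivalence assembles. For (ii)$\Rightarrow$(i): applying the protolocalization $F$ to $0\to K(A)\to A\xrightarrow{\eta_A}F(A)\to 0$ and using that $F(\eta_A)$ is an isomorphism yields $F(K(A))=0$, so by Theorem \ref{thrm:AA.1} every object is $F$-admissible and $\X$ is a semisimple class by Theorem \ref{thrm:AA.6}; moreover $F(k)$ is mono for every ideal inclusion, which by the computation above gives heredity. For (i)$\Rightarrow$(ii): heredity gives $K(K)=K\cap K(A)$, hence $F(k)$ is mono, hence (by the subvariety argument) normal, for every ideal $K\lhd A$, so $F$ preserves short exact sequences. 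I expect the main obstacle to be exactly the normality upgrade of the third paragraph: the entire gap between protoadditive reflections, where monicity of $F(k)$ already suffices by Lemma \ref{mono}, and protolocalizations lives here, and it is where closure of the subvariety under quotients must be used. Checking that this categorical statement is genuinely the relevant part of Theorem 42 of \cite{BCGS08}, specialized to $\Omega$-groups, is the step requiring the most care.
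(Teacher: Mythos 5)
Your argument is correct, but it is genuinely not the paper's: the paper gives no proof at all beyond observing that the proposition is the specialization of (a part of) Theorem 42 of \cite{BCGS08} to the case where $\C$ is a variety of $\Omega$-groups, whereas you reconstruct that specialized statement from scratch. Your route --- reduce protolocalization to ``$F(k)$ is a normal monomorphism in $\X$'' (using that the left adjoint $F$ preserves cokernels), compute $\Ker F(k)=(K\cap K(A))/K(K)$ so that monicity of $F(k)$ for all ideal inclusions is the pointwise heredity condition $K(I)=I\cap K(A)$, and then upgrade mono to normal mono via closure of the subvariety under quotients --- is sound, and its chief merit is that it isolates exactly where Birkhoff-ness (quotient-closure) enters: this is the precise gap between Lemma \ref{mono}, where monicity of $F(k)$ already suffices for protoadditivity, and protolocalization, and your reading of Remark \ref{rem:protoloc} (where $F'(k)$ is mono but not normal in $\Ab_\tf$) confirms it. What the paper's citation buys instead is brevity and generality, since Theorem 42 of \cite{BCGS08} lives at the level of semi-abelian categories with Birkhoff subcategories rather than varieties. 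One point you should make explicit rather than delegate to ``the standard Kurosh--Amitsur translation'': in the direction (hereditary radical class) $\Rightarrow$ ($K(I)=I\cap K(A)$ for all $I\lhd A$), one needs $K(A)$ itself to lie in the radical class, i.e.\ $K(K(A))=K(A)$, in order to see that $I\cap K(A)$ is a radical ideal of $I$ and hence contained in $K(I)$ (the reverse inclusion $K(I)\subseteq I\cap K(A)$ follows, as you say, from closure of $\X$ under subalgebras); this is exactly the semisimplicity hypothesis of (i), available via Theorem \ref{thrm:AA.6}, so your proof is complete once that dependence is stated at the point where the translation is invoked in (i)$\Rightarrow$(ii) --- your (ii)$\Rightarrow$(i) direction already derives semisimplicity first, in the right order.
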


\begin{remark}
\label{rem:AA.9}
For a reflection $F : \C\to\X$ of a variety $\C$ of $\Omega$-groups to a subvariety $\X$, in addition to the equivalences \eqref{AA.5}, we have now:
\begin{equation}\label{AA.6}
 \text{protolocalization } \Leftrightarrow \text{ semisimple with hered. radical } \Rightarrow  \text{ semisimple } \Rightarrow \text{ extension closed,}
\end{equation}
 with known counter-examples to the converse of the last implication (which we have according to Remark \ref{rem:AA.7}). For a counter-example of the previous implication, consider the Burnside variety ${\mathbb B}_6$ of groups of exponent $6$, i.e. satisfying the identity $x^6=1$ and its subvariety ${\mathbb B}_3$ determined by the identity $x^3=1$. ${\mathbb B}_3$ is a torsion-free subvariety of ${\mathbb B}_6$, whose radical is not hereditary (this follows from the results in \cite{GJ}, see in particular its Example $5.1$).
\end{remark}
\begin{remark}
\label{AA.10}
 \begin{enumerate}
 \item We do not know any example of a protolocalization that is not a localization in the cases where $\C$ is either the category of rings or the category of commutative rings (according to the results of \cite{GS1975}, these cases are obviously of interest). However, there are many examples of protoadditive reflections in these categories: for instance, such is the reflector from the category of commutative rings to its torsion-free subcategory of \emph{reduced} rings (i.e. those rings satisfying the implications $x^n=0 \Rightarrow x=0$, for any $n\ge 1$).
 \item\label{AA.10.1}  Let us also point out that there are no non-trivial localizations, neither of the category of rings nor of the category of commutative rings, that are reflections to subvarieties. For, let $F : \C\to\X$ be the canonical reflection to a subvariety with $\C$ being the category of commutative rings. If $A$ is the free commutative ring on a set $S$, then it is a subring of the polynomial ring $\bkZ[S]$ and therefore it is also a subring of the rational function field $\bkQ(S)$. Consider $F(\bkQ(S))$. If $\bkQ(S)$ belongs to $\X$ for every $S$, then $\X$, being a subvariety of $\C$, contains all free commutative rings, and therefore contains all commutative rings. If $\bkQ(S)$ does not belong to $\X$, then $F(\bkQ(S)) = 0$ since $F(\bkQ(S))$ must be a quotient ring of the field $\bkQ(S)$. But, for a non-empty $S$, this means that $F$ carries the monomorphism $A\to\bkQ(S)$ to a non-monomorphism $F(A)\to 0$ (since $F(A)$ is nothing but the free algebra in $\X$ on $S$). The same arguments can be used for non-commutative rings since every free ring can be embedded into a skew field (=division ring), as shown by P. M. Cohn \cite{C1961}.

\item\label{AA.10.3} A further simplified version of the same argument negatively answers an open question of \cite{BCGS08}, namely the question whether the reflection of the variety of commutative von Neumann regular rings to the variety of Boolean rings is a localisation. For, consider the embedding $\bkF_2\to\bkF_4$ of the two-elements field to the four-elements field. Since $\bkF_2$ is Boolean and $\bkF_4$ is not, we have $F(\bkF_2)=\bkF_2$ and $F(\bkF_4) = 0$; hence $F$ carries a monomorphism $\bkF_2\to\bkF_4$ to a non-monomorphism.
\end{enumerate}
\end{remark}

\end{document}